\newcommand{\set}[1]{\left\lbrace #1 \right\rbrace}
\newcommand{\field}[1]{\mathbb{#1}}  
\newcommand{\Q}{\field{Q}} 
\newcommand{\N}{\field{N}} 
\newcommand{\C}{\field{C}} 
\newcommand{\Z}{\field{Z}} 
\newcommand{\F}{\field{F}} 
\renewcommand{\P}{\field{P}}
\newcommand{\PP}{\field{P}}
\newcommand{\floor}[1]{\left\lfloor #1 \right\rfloor}
\newcommand{\petar}[1]{{\color{orange} ($\clubsuit$ Petar: #1)}}
\newcommand{\maarten}[1]{{\color{blue} ($\spadesuit$ Maarten: #1)}}
\DeclareMathOperator{\End}{End}
\DeclareMathOperator{\SL}{SL}
\DeclareMathOperator{\Pic}{Pic}
\DeclareMathOperator{\Sym}{Sym}
\DeclareMathOperator{\genus}{genus}
\newcommand{\lmfdbec}[3]{\href{https://www.lmfdb.org/EllipticCurve/Q/#1/#2/#3}{#1.#2#3}}
\newcommand{\lmfdbmf}[4]{\href{https://www.lmfdb.org/ModularForm/GL2/Q/holomorphic/#1/#2/#3/#4}{#1.#2.#3.#4}}
\newcommand{\computationbound}{3000}
\newtheorem{lemma}{Lemma}
\newtheorem{theorem}[lemma]{Theorem}
\newtheorem{proposition}[lemma]{Proposition}
\newtheorem{corollary}[lemma]{Corollary}
\theoremstyle{definition}
\newtheorem{definition}[lemma]{Definition}
\newtheorem{conjecture}[lemma]{Conjecture}
\newtheorem{remark}[lemma]{Remark}
\newtheorem{setup}[lemma]{Setup}
\numberwithin{lemma}{section}
\numberwithin{equation}{section} 
\numberwithin{figure}{section}
\title{Morphisms on the modular curve $X_0(p)$ and degree $6$ points}
\author{\sc Maarten Derickx}
\address{Maarten Derickx\\
University of Zagreb\\  
Bijeni\v{c}ka Cesta 30 \\
10000 Zagreb\\
Croatia}
\email{maarten@mderickx.nl}
\urladdr{http://www.maartenderickx.nl/}
\author{\sc Petar Orli\'c}
\address{Petar Orli\'c \\
University of Zagreb\\  
Bijeni\v{c}ka Cesta 30 \\
10000 Zagreb\\
Croatia}
\email{petar.orlic@math.hr}
\begin{document}
\begin{abstract}
    Let $p$ be a prime. We study non-constant morphisms $f:X_0(p)_\Q \to Y$, where $Y/\Q$ is a curve of genus $\geq 2$. We prove that for $p<\computationbound$ such an $f$ of degree $d>1$ must be isomorphic to the quotient map $X_0(p)\to X_0^+(p)$. Supported by computational and theoretical evidence, we also conjecture that this is true for all primes $p$.
    
    These results allow us to classify all points of degree $\leq 25$ on $X_0(p)$ that come from a map to some curve of genus $\geq 2$. As an application, we were able to determine all curves $X_0(p)$ with infinitely many points of degree $6$ over $\Q$ except for $p=193$, continuing the previous results on small degree points on $X_0(N)$.
\end{abstract}

\subjclass{11G18, 11G30, 14H30, 14H51}
\keywords{Modular curves, Elliptic curves, Gonality, Density degree}

\maketitle

\section{Introduction}

Modular curves $X(\Gamma)$ are a type of algebraic curves which can be constructed as quotients of the compactified upper half plane $\mathcal{H}^*$ with $\Gamma$, a congruence subgroup of the modular group $\textup{SL}_2(\Z)$. Examples of modular curves are $X_0(N)$ and $X_1(N)$ which correspond to congruence subgroups

\begin{align*}
\Gamma_0(N)&=\left\{ 
\begin{bmatrix}
a & b\\
c & d
\end{bmatrix}
\in \SL_2(\Z) : c\equiv 0\pmod{N} \right\},\\
\Gamma_1(N)&=\left\{ 
\begin{bmatrix}
a & b\\
c & d
\end{bmatrix}
\in \SL_2(\Z) : a,d\equiv 1\pmod{N}, \ c\equiv 0\pmod{N}
\right\}.
\end{align*}

Morphisms $X_0(N)\to Y$ defined over number fields are an important topic in arithmetic geometry and have been the subject of extensive research. Let us first discuss the case when $g(Y)=0$ and $Y$ is isomorphic to $\PP^1$.

Let $C$ be a smooth projective curve over a field $k$. The $k$-gonality of $C$, denoted by $\textup{gon}_k C$, is the least degree of a non-constant $k$-rational morphism $f:C\to\mathbb{P}^1$. Thus, determining the gonality of $X_0(N)$ is very important if one wants to study morphisms $X_0(N)\to\PP^1$.

The gonality of modular curves has been extensively studied.
For example, there are upper bounds on the gonality of any algebraic curve in terms of its genus $g$, see \cite[Proposition A.1]{Poonen2007}. Moreover, there exists a lower bound for the $\C$-gonality for any modular curve $X(\Gamma)$, linear in terms of the index of the corresponding congruence subgroup $\Gamma$ \cite{abramovich}, \cite[Appendix 2]{ Kim2002}.

Regarding the curve $X_0(N)$, all cases when its $\C$-gonality is at most $4$ and $\Q$-gonality is at most $6$ have been determined \cite{Ogg74, Bars99, HasegawaShimura_trig, JeonPark05, NajmanOrlic22}.

Let us now consider the case when $g(Y)=1$ and $Y=E$ is an elliptic curve. Determining all possible degrees of morphisms to $E$ is useful when searching for small degree points because the only curves with infinitely many rational points are those of genus $0$ isomorphic to $\PP^1$ and positive rank elliptic curves. If one wants to determine all possible degrees of rational morphisms $X_0(N)\to E$, there are two cases with respect to the conductor of $E$: $\textup{cond}(E)=N$ and $\textup{cond}(E)<N$ (\textup{cond}(E) is a proper divisor of $N$). The case $\textup{Cond}(E)=N$ is easier and can be dealt with using the Modularity theorem.

\begin{theorem}[Modularity theorem]\label{modularity_thm}
    Let $E/\Q$ be an elliptic curve of conductor $N$. Then there exists a non-constant rational morphism $f:X_0(N)\to E$. Furthermore, if $f$ has the least possible degree, then any other morphism $g:X_0(N)\to E$ factors through $f$.
\end{theorem}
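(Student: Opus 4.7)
The plan is to deduce both statements from the modularity theorem (Wiles, Taylor--Wiles, Breuil--Conrad--Diamond--Taylor) combined with the Eichler--Shimura construction. First I would invoke modularity to produce a weight~$2$ newform $\phi\in S_2(\Gamma_0(N))$ satisfying $a_p(\phi)=a_p(E)$ for all primes $p\nmid N$, so that $L(\phi,s)=L(E,s)$. Because $N$ is the conductor of $E$, the form $\phi$ is genuinely new at level $N$, and because $E$ is defined over $\Q$ its Fourier coefficients lie in $\Z$.

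Next I would use Eichler--Shimura to form the associated abelian variety quotient $A_\phi$ of $J_0(N)=\mathrm{Jac}(X_0(N))$, namely the quotient by the abelian subvariety cut out by the annihilator of $\phi$ in the Hecke algebra. Since $\phi$ has rational coefficients, $A_\phi$ is one-dimensional, and comparing $L$-functions together with Faltings' isogeny theorem shows that $A_\phi$ is isogenous to $E$. Composing an Albanese map $\iota\colon X_0(N)\to J_0(N)$ (basepoint the cusp $\infty$) with the quotient $J_0(N)\to A_\phi$ and an isogeny $A_\phi\to E$ then yields a non-constant morphism $f\colon X_0(N)\to E$, establishing the first assertion.

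For the second assertion, any non-constant $g\colon X_0(N)\to E$ can be translated so that $g(\infty)=0_E$; Albanese functoriality then provides a surjective homomorphism $g_*\colon J_0(N)\to E$ with $g=g_*\circ\iota$. Using the Eichler--Shimura relations and the fact that $E$ is a simple Hecke-equivariant quotient, $g_*$ must annihilate every newform eigencomponent of $J_0(N)$ other than that of $\phi$, and therefore factors through $A_\phi$. Choosing $f$ so that its induced map $f_*$ is the \emph{optimal} (strong Weil) parametrization, i.e.\ the one whose kernel in $J_0(N)$ is connected, minimises the degree among all such parametrizations; every other $g_*$ then factors uniquely through $f_*$, since the complementary quotient of $A_\phi$ by the kernel of $f_*$ is trivial. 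Passing back from maps of abelian varieties to maps of curves via $\iota$ yields the factorisation $g=h\circ f$ for some $h\colon E\to E$. The only real obstacle in this outline is the invocation of modularity itself, which is an extraordinarily deep theorem; conditional on it, each remaining step is a standard consequence of the structure of $J_0(N)$ as a module over the Hecke algebra.
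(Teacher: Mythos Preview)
The paper does not prove this theorem; it is stated in the introduction as the classical Modularity theorem (Wiles, Taylor--Wiles, Breuil--Conrad--Diamond--Taylor) together with its standard consequence about factoring through a minimal-degree parametrisation, and is then used without further argument. So there is no ``paper's own proof'' to compare against, and your outline is the expected one.

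Your sketch of the first assertion is the standard argument and is fine. For the second assertion there is one genuine slip. You write that choosing $f$ so that $\ker f_*\subseteq J_0(N)$ is connected (the optimal quotient) minimises the degree, and then deduce that any $g_*$ factors through $f_*$. But the optimal quotient has target the strong Weil curve $E_0$ in the isogeny class, not the given $E$; when $E\not\cong E_0$ there is \emph{no} surjection $J_0(N)\to E$ with connected kernel, yet a minimal-degree $f$ still exists. So ``minimal degree'' and ``connected kernel'' do not coincide in general, and your stated justification breaks down.

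The repair is short. Every $g_*:J_0(N)\to E$ factors through the optimal quotient $\pi:J_0(N)\to E_0$, so $g_*=\beta_g\circ\pi$ for some $\beta_g\in\Hom_\Q(E_0,E)$. Since $E$ has no CM defined over $\Q$, $\Hom_\Q(E_0,E)\cong\Z$; let $\beta_0$ be a generator. The minimal-degree $f$ corresponds to $\beta_f=\pm\beta_0$, and any other $\beta_g$ is an integer multiple of $\beta_0$, hence factors through $\beta_f$ via some $[n]\in\End_\Q(E)$. Composing back with $\iota$ and undoing the translation gives $g=h\circ f$ as required. This is exactly the point the paper alludes to in the sentence following the theorem (``This holds even for CM curves because the extra endomorphisms are not defined over $\Q$'').
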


Therefore, all rational morphisms $X_0(N)\to E$ are of the form $[d]\circ f$ for some $d\in\Z$. This holds even for CM curves because the extra endomorphisms are not defined over $\Q$. The case $\textup{cond}(E)<N$ is more involved. Derickx and Orlić \cite[Propositions 1.10, 1.11]{DerickxOrlic23} gave a method to compute all possible degrees of rational morphisms $X_0(N)\to E$ if $E$ has positive rank.

This paper focuses on the case when $g(Y)\geq2$. A useful result here is the De Franchis-Severi theorem.

\begin{theorem}[De Franchis-Severi]
    Let $X$ be a compact Riemann surface of genus $g\geq2$. Then the following two statements hold:
    \begin{enumerate}[label=(\alph*)]
        \item For a fixed compact Riemann surface $Y$ of genus $\geq2$, the number of non-constant holomorphic maps $\pi: X\to Y$ is finite.
        \item There are only finitely many compact Riemann surfaces $Y$ of genus $\geq2$ which admit a non-constant holomorphic map from $X$.
    \end{enumerate}
    The sets in part $(a)$ and $(b)$ are not only finite, but are in fact explicitly computable \cite[Theorem 5.7]{BakerGonzalezPoonen2005}.
\end{theorem}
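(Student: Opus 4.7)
The plan is to reduce both statements to rigidity plus a finite-type $\Hom$/Hilbert scheme argument. Write $g$ for the genus of $X$. The starting point for both parts is Riemann--Hurwitz: any non-constant $f \colon X \to Y$ with $Y$ of genus $g_Y \geq 2$ satisfies $2g - 2 \geq (\deg f)(2g_Y - 2)$, which immediately gives $g_Y \leq g$ and, once $g_Y$ is fixed, $\deg f \leq (g-1)/(g_Y-1)$.

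For part (a), fix such a $Y$. Because $\deg f$ is bounded, the component of $\Hom(X,Y)$ containing $[f]$ is of finite type. Its tangent space at $[f]$ is $H^0(X, f^* T_Y)$, and $f^* T_Y$ is a line bundle of degree $(\deg f)(2 - 2g_Y) < 0$, so this $H^0$ vanishes. Hence $\Hom(X,Y)$ is zero-dimensional of bounded degree, and only finitely many $f$ exist.

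For part (b), I would work at the level of Jacobians. A non-constant $f \colon X \to Y$ induces an injection with finite kernel $f^* \colon J(Y) \hookrightarrow J(X)$, so $J(Y)$ is, up to isogeny, an abelian subvariety of $J(X)$. Poincar\'e reducibility decomposes $J(X) \sim \prod A_i^{n_i}$ with the $A_i$ simple and pairwise non-isogenous, and any abelian subvariety is, up to isogeny, a product of sub-powers of the $A_i$, a finite set. Within each isogeny class only finitely many principally polarized abelian varieties admit a polarization of bounded degree, and by Torelli's theorem each such principally polarized abelian variety determines at most one curve $Y$.

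The step I expect to be most delicate is controlling the polarization on $J(Y)$ inherited from the embedding into $J(X)$ well enough to invoke the finiteness-within-an-isogeny-class statement uniformly in $Y$. A perhaps cleaner alternative is to parametrize pairs $(Y,f)$ directly via Hilbert schemes of graphs $\Gamma_f \subset X \times Y$ of bounded bidegree, combine the rigidity $H^0(X, f^*T_Y)=0$ with the general-type nature of $Y$ to obtain zero-dimensionality of the moduli of such pairs, and conclude via finite-type of the Hilbert scheme; this bypasses the polarization bookkeeping and yields both (a) and (b) simultaneously.
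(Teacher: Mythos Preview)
The paper does not give a proof of the De Franchis--Severi theorem. It is quoted in the introduction as a classical background result, without proof or attribution, solely to motivate the study of morphisms $X_0(p)\to Y$ with $g(Y)\geq 2$. There is therefore nothing in the paper to compare your proposal against.

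As for the proposal on its own merits: part (a) is the standard rigidity argument and is correct. For part (b), the Jacobian route has exactly the gap you flag. There can be infinitely many principally polarized abelian varieties in a fixed isogeny class (already for products of elliptic curves), and if $J(X)$ has a simple factor with multiplicity $\geq 2$ there are infinitely many abelian subvarieties in that isogeny class as well; so one genuinely needs to exploit the bound $\ker f^*\subseteq J(Y)[\deg f]$ together with the Riemann--Hurwitz bound on $\deg f$, and then some finiteness input for polarized abelian varieties. This can be made to work but is not automatic. Your Hilbert-scheme alternative is the cleaner route and is essentially how the theorem is proved in the modern literature; the one point to add is that since $Y$ varies you must first embed all candidate $Y$ pluricanonically into a fixed $\PP^N$ (possible because $g_Y\leq g$), so that the graphs $\Gamma_f$ live in the fixed ambient $X\times\PP^N$ and the relevant Hilbert scheme is of finite type.
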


This differs from the genus $0$ or $1$ case where there can be infinitely many morphisms to $\PP^1$ or an elliptic curve. Therefore, it makes sense to try to determine possible morphisms from a fixed curve $X$ to a curve of genus $\geq2$, especially if $X$ is a modular curve.

Baker, Gonz{\'a}lez-Jim{\'e}nez, Gonz{\'a}lez and Poonen \cite{BakerGonzalezPoonen2005} consider the maps from $X_0(N)$ and $X_1(N)$ to curves of genus $g\geq2$ and get some interesting results. For example, they proved that for each $g\geq2$, the set of new modular curves (curves whose Jacobian is a quotient of $J_1(N)_{\textup{new}}$) over $\Q$ of genus $g$ is finite and computable. They also proved that for each $G\geq2$, the set of new modular curves over $\Q$ of gonality $\leq G$ is finite and computable and obtained several results giving restrictions on new modular hyperelliptic curves over $\Q$. We use some of their results in the proofs of our main theorems.

It is natural to try extending their results by considering the maps to general curves of genus $g\geq2$.

In this paper we study the modular curve $X_0(p)$ for $p$ prime. The reason for considering only the prime levels is that the arguments become simpler and more computable. For example, all simple factors $A\subseteq J_0(p)$ occur with multiplicity $1$ and are associated with newforms. In particular, $J_0(p)=J_0(p)^{new}$, enabling us to use the results of \cite{BakerGonzalezPoonen2005}. Moreover, $J_0(p)$ has only finitely many abelian subvarieties over $\Q$. This is not the case for composite levels, see \Cref{J0p_finite_subvarieties} and \Cref{J0N_infinite_subvarieties}.


Our first main result is the following theorem

\begin{theorem}\label{thm_p<3000}
Let $p<\computationbound$ be a prime and let $Y$ be a nice (smooth, projective, and geometrically integral) curve over $\Q$ of genus $\geq 2$. Suppose that $f: X_0(p) \to Y$ is a non-constant morphism of degree $d>1$ defined over $\Q$. Then $f$ is isomorphic to the quotient map $X_0(p) \to X_0^+(p)=X_0(p)/w_p$.
\end{theorem}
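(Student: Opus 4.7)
The strategy is to pass to Jacobians, exploit the particularly clean simple-factor decomposition of $J_0(p)$ available at prime level, and then identify the Atkin-Lehner quotient as the only candidate image curve.

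First, any non-constant morphism $f \colon X_0(p) \to Y$ of degree $d$ over $\Q$ induces, by Albanese functoriality, $f^* \colon J(Y) \to J_0(p)$ with finite kernel and $f_* \colon J_0(p) \to J(Y)$ surjective, with $f_* \circ f^* = [d]$. Hence $B := f^*(J(Y))$ is a $\Q$-rational abelian subvariety of $J_0(p)$ of dimension $g(Y) \geq 2$. Since $p$ is prime, every weight-$2$ cusp form of level $p$ is a newform, and $J_0(p)$ decomposes up to $\Q$-isogeny as
\[
J_0(p) \sim \prod_{[g] \in \mathcal{N}} A_g,
\]
where $\mathcal{N}$ runs over Galois conjugacy classes of normalised newforms and the $\Q$-simple factors $A_g$ are pairwise non-isogenous. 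Thus $B$ is isogenous to $\prod_{[g] \in S} A_g$ for some subset $S \subseteq \mathcal{N}$, and the theorem reduces to showing that the only $S$ giving rise to a curve $Y$ of genus $\geq 2$ admitting a morphism from $X_0(p)$ is $S = \mathcal{N}^+ := \{[g] : w_p g = g\}$, which corresponds to the quotient $X_0^+(p)$.

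To rule out the other subsets I would apply an Atkin-Lehner averaging argument. By the De Franchis-Severi theorem, the set of $\Q$-morphisms $X_0(p) \to Y$ is finite, and pre-composition by $w_p$ permutes it. A careful analysis should force $w_p$ to fix $f$ up to an automorphism of $Y$, so that $f$ descends through $X_0(p)/\langle w_p \rangle = X_0^+(p)$. Combined with the classical description of $\Aut_\Q(X_0(p))$ at prime level -- generated by $w_p$ once $p$ exceeds a small explicit bound, with tabulated exceptions for small $p$ -- this pins down $Y \cong X_0^+(p)$ and identifies $f$ with the Atkin-Lehner quotient. The cut-off $p < \computationbound$ precisely reflects the range in which the Galois orbit structure on $\mathcal{N}$, the Atkin-Lehner signs, and the dimensions of the factors $A_g$ are accessible by explicit computation using existing databases.

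The hardest step is the passage from ``$B \sim \prod_{[g] \in S} A_g$'' to ``$S = \mathcal{N}^+$''. A priori, every Galois-stable $S$ produces an abelian subvariety, and one needs a genuinely geometric input -- not merely abelian-variety bookkeeping -- to ensure that $B$ arises from a \emph{curve} cover rather than from an arbitrary isogeny-quotient of $J_0(p)$. The Atkin-Lehner argument supplies part of this input; for each prime in the range, I expect that it must be reinforced by gonality estimates (via $\gon_{\Q} Y \geq \gon_{\Q} X_0(p)/d$ combined with the known lower bounds on $\gon_{\Q} X_0(p)$) and by explicit inspection of the small-genus quotients of $X_0(p)$. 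The bulk of the work therefore consists of a systematic case check for each prime $p < \computationbound$, with the structural Jacobian arguments above doing the conceptual lifting.
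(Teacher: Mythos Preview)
Your setup is sound and matches the paper: pass to $B:=f^*(J(Y))\subseteq J_0(p)$, use multiplicity one at prime level to identify $B$ with a subset $S\subseteq\mathcal N$, and then try to show $S=\mathcal N^+$. The gap is the step where you rule out the other subsets. Your Atkin--Lehner averaging argument does not work as written: De Franchis--Severi tells you that $w_p$ acts by an involution on the finite set $\Hom(X_0(p),Y)/\Aut(Y)$, but nothing forces the orbit of $f$ to have size $1$; a $2$-cycle is entirely possible, in which case $f\circ w_p$ is a different cover and $f$ does not descend through $X_0^+(p)$. (Equivalently: $w_p^*$ stabilises $B$ as an abelian subvariety since each simple factor has $w_p$-eigenvalue $\pm1$, but that says nothing about whether $f$ and $f\circ w_p$ differ by an automorphism of $Y$.) So you are left with no mechanism to eliminate the roughly $2^n$ candidate subsets $S$, and the vague appeal to gonality bounds and ``explicit inspection of small-genus quotients'' does not supply one: $Y$ is a priori an unknown curve, not a quotient of $X_0(p)$ by an automorphism.

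The paper's substitute for your missing step is a concrete numerical criterion coming from polarizations. For $A\subseteq J_0(p)$ one computes the exponent $e_A$ of the kernel of the polarization on $A$ induced from the principal polarization on $J_0(p)$ (the ``modular kernel''), and shows that if $A=f^*(J(Y))$ then $e_A\mid\deg f$. Combined with Riemann--Hurwitz, this rules out any $A$ with $2g_0(p)-2<e_A(2\dim A-2)$. This is the workhorse: for $p<\computationbound$ a SageMath loop over all $2^n$ subvarieties leaves only (i) subvarieties with $\dim A$ so large that Riemann--Hurwitz alone forces $\deg f=2$, whence Kenku--Momose gives $f\cong w_p$; (ii) $A=J_0^+(p)$ itself, handled by a result of Baker--Gonz\'alez-Jim\'enez--Gonz\'alez--Poonen; and (iii) eight sporadic $2$-dimensional $A$, eliminated via the classification of new genus-$2$ modular curves. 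Your proposal contains the correct framing but not this polarization input, and without it the case analysis cannot be carried out.
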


This leads to the question whether the results of \cref{thm_p<3000} are a consequence of the law of small numbers, or part of a more general phenomenon.

Recall that $J_0(p)^+:=(1+w_p)J_0(p)$ and $J_0(p)^-:=(1-w_p)J_0(p)$ (so that $J_0(p)\sim J_0(p)^+\oplus J_0(p)^-$). These are the largest abelian subvarieties of $J_0(p)$ on which the Atkin-Lehner involution $w_p$ as multiplication by $1$ and $-1$ respectively.

Our second main result shows that this is indeed try under certain conditions on the isogeny decompostion of $J_0(p)$.

\begin{theorem}\label{thm_jacobian_decomposition}
Let $p$ be a prime and let $J_0(p)$ be the Jacobian of the curve $X_0(p)$.  If $J_0(p)$ is isogenous to \[A^+ \oplus A^- \oplus A\] with $A^+ \subseteq J_0(p)^+$, $A^- \subseteq J_0(p)^-$ being simple abelian varieties and $A\subseteq J_0(p)$ such that $\dim A \leq 2$, then, up to isomorphism, the quotient map $X_0(p) \to X_0^+(p)$ is the only non-constant rational morphism of degree $d>1$ from $X_0(p)$ to a curve of genus $\geq 2$. 
\end{theorem}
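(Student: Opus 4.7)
The plan is to analyse the abelian subvariety $B := f^{*}J(Y) \subseteq J_0(p)$, which is isogenous to $J(Y)$ since $f^{*}$ has finite kernel, in the light of the decomposition $J_0(p) \sim A^+ \oplus A^- \oplus A$ together with the action of the Atkin--Lehner involution $w_p$. Under the hypothesis, every abelian subvariety of $J_0(p)$ is isogenous to a sum of some of the simple factors $A^+, A^-$ and the simple constituents of $A$, and $\dim B = g(Y) \geq 2$.

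I would split into two cases, according to whether $f \circ w_p = f$ or not. In the factoring case, $f$ descends through the quotient $\pi \colon X_0(p) \to X_0^+(p)$ as $f = g \circ \pi$ for some non-constant $g \colon X_0^+(p) \to Y$. If $\deg g = 1$ then $g$ is an isomorphism and $f$ is isomorphic to $\pi$. Otherwise $g^{*}J(Y) \hookrightarrow J(X_0^+(p))$, which is isogenous to $A^+ \oplus A_+$ where $A_+$ denotes the part of $A$ lying in $J_0(p)^+$, of dimension at most $2$. Since $A^+$ is simple, the only possible isogeny shapes for $J(Y)$ are $A^+$, a sub-factor of $A_+$, or the whole $A^+ \oplus A_+$; the last is incompatible with $\deg g \geq 2$ by Torelli, and the other two can be excluded using the not-a-curve criterion developed earlier in the paper, which bounds when an abelian subvariety of $J_0(p)$ can be isogenous to the Jacobian of a curve admitting a non-constant morphism from $X_0(p)$.

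In the non-factoring case $f \circ w_p \neq f$, we obtain two distinct non-constant morphisms $X_0(p) \to Y$, so the abelian subvariety $B + w_p(B) \subseteq J_0(p)$ is $w_p$-stable and has non-zero $+$ and $-$ eigencomponents. By the simplicity of $A^{\pm}$, these eigencomponents must respectively contain $A^+$ and $A^-$. Combined with $\dim B \geq 2$ and the upper bound $\dim A \leq 2$, this constrains the isogeny shape of $B$ to a short list of possibilities, each of which I would rule out via the same not-a-curve criterion.

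The main obstacle is the non-factoring case, where one must reconcile the two pullbacks $f^{*}J(Y)$ and $(f \circ w_p)^{*}J(Y)$ inside the decomposition of $J_0(p)$; it is precisely the hypothesis $\dim A \leq 2$, together with the simplicity of $A^{\pm}$, that renders the not-a-curve criterion applicable throughout the remaining case analysis and forces $f$ to be isomorphic to $\pi$.
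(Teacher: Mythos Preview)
Your proposal is a plan rather than a proof, and the plan has genuine gaps that do not close without the quantitative input the paper actually uses.

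First, the ``not-a-curve criterion'' you repeatedly invoke is \Cref{cor:not_curve_criterion}, which requires knowing the exponent of the modular kernel $\exp(\ker\phi_A)$. This is a computational datum, not an abstract one: the paper applies it only for $p<\computationbound$ via explicit SageMath calculations. For arbitrary $p$ subject only to the isogeny-type hypothesis of the theorem, you have no control over $\exp(\ker\phi_A)$, so the criterion gives nothing. The paper instead uses class-number estimates (\Cref{quotient_genus_formula}--\Cref{class_number_ub_corollary}) to prove $\tfrac{g_0(p)}{3}+3<g_0^+(p)\leq\tfrac{g_0(p)}{2}$ for all $p>\computationbound$; this forces $\dim A^+>\tfrac{g_0(p)}{3}+1$ and $\dim A^-\geq\tfrac{g_0(p)}{2}-2$, after which plain Riemann--Hurwitz (not the modular-kernel criterion) forces $\deg f=2$ whenever $A^+$ or $A^-$ sits inside $f^*J(Y)$. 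That dimension bound is the engine of the argument, and your plan has nothing to replace it.

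Second, your non-factoring case contains an unjustified step. From $f\circ w_p\neq f$ you conclude that $B+w_p(B)$ has nonzero $+$ \emph{and} $-$ eigencomponents. This is false in general: if, say, $B$ is isogenous to $A^-$ (which is entirely in the $-$-part), then $w_p$ acts on $A^-$ as $-1$, hence $w_p(B)=B$ as subvarieties and $B+w_p(B)=B$ has zero $+$-part. Nothing in the hypothesis $f\circ w_p\neq f$ prevents this. The paper's trichotomy avoids the issue entirely: since every simple factor of $J_0(p)$ occurs with multiplicity one, either $A^+\subseteq f^*J(Y)$, or $A^-\subseteq f^*J(Y)$, or $f^*J(Y)\subseteq A$; the first two cases are handled by Riemann--Hurwitz plus \cite{kenkumomose1988automorphism}, and the last by the genus-$2$ classification of \cite{BakerGonzalezPoonen2005}.

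Finally, your ``Torelli'' remark in the factoring case is not a proof either; the paper handles $f^*J(Y)=\pi^*J_0^+(p)$ via \cite[Proposition~2.11]{BakerGonzalezPoonen2005}, which is the actual result doing the work there.
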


Numerically, it seems that \Cref{thm_jacobian_decomposition} is applicable for a majority of prime levels $p$. Indeed, by analyzing the data computed in \cite{Best2021} (also available on LMFDB \cite{lmfdb}) and \cite[Section 9]{Cowan2022} it follows that it is applicable for 77636 out of the 78498 primes below 1,000,000 ($\approx 98.9\%$). Furthermore, we also computed this percentage for all primes below $B$ for several $B<1,000,000$ and this seems to suggest that this percentage is slowly climbing to $100\%$. 

Motivated by \Cref{thm_p<3000} and \Cref{thm_jacobian_decomposition}, we state the following conjecture.

\begin{conjecture}\label{x0n_morphism_conjecture}
    Let $p$ be a prime, $Y$ a curve of genus at least $2$, and $f: X_0(p) \to Y$ a non-constant morphism of degree at least $2$.  Then $f$ is isomorphic to the quotient map $X_0(p) \to X_0^+(p)$.
\end{conjecture}

There are two reasons we think why \Cref{x0n_morphism_conjecture} is true. First, the number of $A \subseteq J_0(p)$ not ruled out by \Cref{cor:not_curve_criterion} becomes more sparse with larger values of $p$. Second, the computational results of \cite{Cowan2022} show that it is very rare for curves $X_0(p)$ to have Jacobian factors of dimension $\displaystyle\leq\frac{2g(X_0(p))-2}{3}+1$ (factors of larger dimension are much easier to deal with: Riemann-Hurwitz formula implies a degree $2$ map from $X_0(p)$ which necessarily comes from an involution, see \Cref{deg2map_remark}). Moreover, if such factors occur, they seem to have a very small dimension and \cite[Theorem 1.3]{BakerGonzalezPoonen2005} shows that there is a finite and computable set of counterexamples for every genus $g(Y)$. 

In \cite[Section 6.5]{BakerGonzalezPoonen2005} it is proved that any counterexample to \Cref{x0n_morphism_conjecture} with $g(Y)=2$ has to be of level $\leq3000$. Together with \Cref{thm_p<3000} this gives the following corollary.
\begin{corollary}
    There are no counterexamples to \Cref{x0n_morphism_conjecture} with $g(Y)=2$.
\end{corollary}

These results can be used to determine the infiniteness of degree $d$ points on $X_0(p)$ for small $d$, which is an important property of an algebraic curve. Faltings' theorem tells us that a curve $C/\Q$ has infinitely many rational points if and only if $C$ is isomorphic to $\mathbb{P}^1$ ($g=0$) or $C$ is an elliptic curve ($g=1$) with positive rank. The next theorem solves the case $d=2$.

\begin{theorem}[{Harris, Silverman: \cite[Corollary 3]{HarrisSilverman91}}]\label{harrissilverman}
    Let $K$ be a number field, and let $C/K$ be a curve of genus at least $2$. Assume that $C$ is neither hyperelliptic nor bielliptic. Then the set of quadratic points on $C/K$ is finite.
\end{theorem}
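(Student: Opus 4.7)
The plan is to reduce finiteness of quadratic points to finiteness of $K$-rational points on the symmetric square $C^{(2)}$, embed $C^{(2)}$ into the Jacobian $J(C)$ via an Abel--Jacobi map, and then apply Faltings' theorem on $K$-points of closed subvarieties of abelian varieties.

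First I would observe that each quadratic point $P \in C(\bar K) \setminus C(K)$ together with its Galois conjugate $P^\sigma$ produces an effective $K$-rational divisor $P + P^\sigma$ of degree $2$, hence a $K$-point of $C^{(2)}$. Conversely, every $K$-point of $C^{(2)}$ is either of this form or equals $P + Q$ with $P, Q \in C(K)$; since $C(K)$ is finite by Faltings, it is enough to prove that $C^{(2)}(K)$ is finite. I would then fix a degree-$2$ divisor class $[D_0]$ over $K$ (working over a finite extension and descending if necessary) and form
\[
\phi : C^{(2)} \to J(C), \qquad D \mapsto [D - D_0].
\]
Because $g(C) \geq 2$ and $C$ is not hyperelliptic, every effective degree-$2$ divisor on $C$ moves in a $0$-dimensional linear system, so $\phi$ is injective; a standard tangent-space computation upgrades this to a closed immersion, so $W_2 := \phi(C^{(2)})$ is a surface inside $J(C)$.

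Next I would apply Faltings' theorem on $K$-rational points of closed subvarieties of abelian varieties: $W_2(K)$ is contained in a finite union of translates $a_i + B_i$ with $B_i \subseteq J(C)$ an abelian subvariety and $a_i + B_i \subseteq W_2$. Thus $W_2(K)$, and therefore $C^{(2)}(K)$, is finite unless $W_2$ contains a translate of a positive-dimensional abelian subvariety $B$. Since $\dim W_2 = 2$, there are two cases. If $\dim B = 1$, writing $B = E$ for an elliptic curve, the preimage $\phi^{-1}(a + E) \subset C^{(2)}$ is a curve whose two projections to $C$ yield either a $g^1_2$ on $C$ (so $C$ is hyperelliptic) or a degree-$2$ morphism $C \to E'$ to an elliptic curve $E'$ isogenous to $E$ (so $C$ is bielliptic). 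If $\dim B = 2$, then $W_2$ itself is a translate of an abelian surface, and a similar analysis again forces a $g^1_2$ on $C$ and hence hyperellipticity.

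The main obstacle is this last step: showing that any positive-dimensional abelian subvariety translated inside $W_2$ is accounted for by a hyperelliptic or bielliptic structure on $C$. The cleanest route is to invoke the classical classification of abelian subvarieties of $W_d$ in a Jacobian due to Abramovich--Harris and Debarre--Fahlaoui rather than argue by hand, after which the theorem follows by contraposition.
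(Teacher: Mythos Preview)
The paper does not give its own proof of this statement: Theorem~\ref{harrissilverman} is quoted from \cite[Corollary 3]{HarrisSilverman91} and used as a black box, so there is nothing in the paper to compare your argument against.

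That said, your sketch is essentially the original Harris--Silverman argument. A few comments on the rough spots. For the base point of the Abel--Jacobi map you do not need to pass to an extension: any $K$-rational divisor class of degree $2$ (for instance a suitable multiple or fraction of the canonical class when $g\geq 2$) will do, and injectivity of $\phi$ is exactly the statement that $C$ carries no $g^1_2$. In the $\dim B=1$ case your description is correct but compressed; the clean way to see it is that the composition $C\hookrightarrow C^{(2)}\to J(C)\to J(C)/B$ has one-dimensional image, and analysing that image produces either a $g^1_2$ or a degree-$2$ map to an elliptic curve. Your treatment of the $\dim B=2$ case is where the sketch is weakest: rather than ``a similar analysis forces a $g^1_2$'', the point is that if $W_2$ were a translate of an abelian surface $B$, then $W_2-W_2\subseteq B$, while $W_2-W_2$ contains $C-C$, which generates $J(C)$; hence $B=J(C)$ and $g=2$, so $C$ is hyperelliptic. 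Since you are already assuming $C$ is not hyperelliptic, this case is simply vacuous. With that fix you do not need to invoke Abramovich--Harris or Debarre--Fahlaoui at all for $d=2$; those references become relevant only for higher symmetric powers.
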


Kadets and Vogt \cite[Theorem 1.2 and Proposition 6.3]{KadetsVogt} also gave a characterization of curves with infinitely many degree $d$ points for $d\leq5$, although their characterization relies on Debarre-Fahlaoui curves which can be difficult to apply. The reason for that is that we do not know how to prove in the general case that a curve is not Debarre-Fahlaoui. The only result in that direction we know is that Debarre-Fahlaoui curves must admit a non-constant morphism to a positive rank elliptic curve \cite[Proposition 4.6]{DHJOdensitydegree}. This will also be discussed later in the paper.

The question of determining whether the curve $C/K$ has infinitely many degree $d$ points over $K$ is closely related to determining the $K$-gonality of $C$, as can be seen from the following theorem.

\begin{theorem}[Frey \cite{frey}]\label{frey}
    Let $C$ be a curve defined over a number field $K$. If $C$ has infinitely many points of degree $\leq d$ over $K$, then $\textup{gon}_K C\leq2d$.
\end{theorem}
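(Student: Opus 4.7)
The plan is to use the classical argument combining Faltings' theorem on subvarieties of abelian varieties with the Abel--Jacobi map $\phi_e\colon C^{(e)} \to \Pic^e(C)$ from the symmetric power, whose fibers are complete linear systems. Given infinitely many closed $K$-points of $C$ of degree $\leq d$, I would first pass to an infinite subsequence of points of a common degree $e \leq d$, each corresponding to a distinct $K$-rational effective divisor $D_i \in C^{(e)}(K)$. In particular $\Pic^e(C)(K) \neq \emptyset$, so I may identify $\Pic^e(C)$ with $J(C)$ over $K$.

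Next I would consider the classes $[D_i] \in J(C)(K)$ and split into two cases. If only finitely many distinct classes occur, two distinct $D_i$'s share a class, so the corresponding complete linear system has positive dimension over $K$; any $K$-rational pencil in it gives a morphism $C \to \PP^1$ of degree $\leq e \leq d$, and we are done. Otherwise the classes $\{[D_i]\}$ form an infinite subset of $W_e(K)$, where $W_e := \phi_e(C^{(e)}) \subseteq J(C)$. Faltings' theorem on rational points of subvarieties of abelian varieties then implies that $W_e$ contains a translate $a + B$ with $a \in J(C)(K)$ and $B \subseteq J(C)$ a positive-dimensional abelian subvariety over $K$ for which $B(K)$ is infinite.

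To extract a pencil of degree $\leq 2d$, I would use a doubling trick. For each $b \in B(K)$ the fiber $\phi_e^{-1}(a+b)$ is a $K$-rational linear system; if it has positive dimension for even one such $b$, we immediately obtain a $K$-rational pencil of degree $e$. Otherwise each fiber is a single $K$-rational divisor $D_b$, and $b \mapsto D_b$ is injective. Fixing two distinct $b_1, b_1' \in B(K)$ and letting $b_2$ range over $B(K)$, the two $K$-rational effective degree-$2e$ divisors $D_{b_1} + D_{b_2}$ and $D_{b_1'} + D_{b_1 + b_2 - b_1'}$ both have image $2a + b_1 + b_2$ in $J(C)$, so they are linearly equivalent. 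The injectivity of $b \mapsto D_b$ shows these divisors coincide as multisets in $C^{(2e)}$ only when $b_2 = b_1'$; for any other $b_2 \in B(K)$ (of which there are infinitely many) they are distinct, so $h^0(D_{b_1} + D_{b_2}) \geq 2$ over $K$, yielding a $K$-rational morphism $C \to \PP^1$ of degree $\leq 2e \leq 2d$.

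The main obstacle is the last step: turning the positive-dimensional abelian subvariety inside $W_e$ produced by Faltings into an actual $K$-rational pencil on $C$. The doubling trick, which combines two effective degree-$e$ divisors into one degree-$2e$ divisor and exploits the group law on $B$ to produce two distinct divisors in the same linear equivalence class, is precisely what accounts for the factor of $2$ in Frey's bound $\gon_K C \leq 2d$.
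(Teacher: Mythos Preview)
The paper does not give its own proof of this statement; it is quoted as a known result of Frey and used as a black box. Your outline is the standard argument and is essentially correct: Faltings' theorem applied to $W_e \subseteq J(C)$ either produces a $K$-rational pencil of degree $\leq e$ directly (when two distinct effective divisors share a class), or yields a positive-dimensional abelian subvariety $B$ with a translate inside $W_e$, after which a doubling trick extracts a pencil of degree $\leq 2e$.

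There is one imprecision in your final step. The claim that injectivity of $b \mapsto D_b$ forces $D_{b_1}+D_{b_2}=D_{b_1'}+D_{b_1+b_2-b_1'}$ only when $b_2=b_1'$ is not justified for $e \geq 2$: a degree-$2e$ effective divisor can decompose as a sum of two degree-$e$ effective divisors in several unrelated ways, so equality of the sums does not force matching of the summands. A cleaner variant of the doubling trick avoids this issue. For a fixed $b_0 \in B(K)$, the morphism $h\colon B \to C^{(2e)}$ given by $b \mapsto D_{b_0+b}+D_{b_0-b}$ lands entirely in the single linear system $|2D_{b_0}|$. If $h$ were constant then $D_{b_0+b} \leq 2D_{b_0}$ for all $b$, which is impossible since only finitely many effective divisors lie below $2D_{b_0}$ while $b \mapsto D_{b_0+b}$ is injective on the positive-dimensional variety $B$. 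Hence $|2D_{b_0}|$ is a positive-dimensional projective space over $K$ (it has the $K$-rational point $2D_{b_0}$), and any $K$-rational line in it gives a morphism $C \to \PP^1$ of degree $\leq 2e \leq 2d$.
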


Using \Cref{thm_p<3000} we are able to obtain of final main result. Namely, a classification of all modular curves $X_0(p)$ of prime level $p$ with infinitely many points of degree $6$, with the exception for the level $p=193$ which remains open. See \Cref{known_results_section} for previous results on points for degree $\leq5$. 

\begin{theorem}\label{deg6_thm}
    Let $p\neq193$ be a prime. The modular curve $X_0(p)$ has infinitely many points of degree $6$ over $\Q$ if and only if $p\leq151$ or \[p\in\{163,167,179,181,191,227,239,269\}.\]
\end{theorem}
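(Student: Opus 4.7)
The plan is to apply a standard dichotomy in the style of Kadets-Vogt for density-degree sets. Any curve $C/\Q$ with infinitely many degree $6$ points must fall into at least one of the following buckets: (a) $C$ admits a $\Q$-morphism of degree at most $6$ to $\PP^1$; (b) $C$ admits a $\Q$-morphism of degree at most $6$ to a positive rank elliptic curve $E/\Q$; (c) there is a $\Q$-morphism $f\colon C \to Y$ of degree $d \leq 3$ onto a curve $Y/\Q$ of genus $\geq 2$ such that $Y$ has infinitely many degree $6/d$ points over $\Q$; or (d) a Debarre-Fahlaoui configuration in $\Sym^6 C$ produces a positive-dimensional family of degree $6$ divisors not accounted for by (a)-(c).

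For the "if" direction, for each prime $p$ in the stated list I would exhibit an explicit source of infinitely many degree $6$ points. For $p \leq 151$ this is usually immediate from the Najman-Orli\'c gonality tables \cite{NajmanOrlic22}, which provide a $\Q$-morphism $X_0(p) \to \PP^1$ of degree $\leq 6$. For the sporadic primes $p \in \{163, 167, 179, 181, 191, 227, 239, 269\}$ the $\Q$-gonality exceeds $6$, but one can compose the degree $2$ Atkin-Lehner quotient $X_0(p) \to X_0^+(p)$ with a degree $3$ source on $X_0^+(p)$ (from trigonality of $X_0^+(p)$, or from a rank-positive elliptic factor of $J_0^+(p)$, handled via \cite[Proposition 1.10]{DerickxOrlic23}), or directly use a degree $\leq 6$ morphism from $X_0(p)$ to a rank-positive elliptic factor of $J_0(p)$.

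For the "only if" direction I would assume infinitely many degree $6$ points exist and rule out cases (a)-(d) for every prime $p$ outside the listed set. Case (a) is excluded by the Najman-Orli\'c gonality classification, which gives $\gon_\Q X_0(p) \geq 7$ for all such $p$. Case (b) is excluded by enumerating positive rank rational elliptic factors of $J_0(p)$ and applying the Derickx-Orli\'c method to check that no $\Q$-morphism of degree $\leq 6$ exists. Case (c) is where \Cref{thm_p<3000} is the crucial input: for $p < 3000$ the only possible target $Y$ is $X_0^+(p)$, with $d = 2$, so we reduce to showing that $X_0^+(p)$ has only finitely many degree $3$ points; this in turn is handled by a Kadets-Vogt analysis at $d = 3$ together with known data on the gonality of $X_0^+(p)$ and its positive rank Jacobian factors. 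For $p \geq 3000$ the gonality lower bounds of Abramovich-Kim-Sarnak already preclude case (a), and \Cref{thm_jacobian_decomposition}, combined with an estimate on the dimensions of rational elliptic and genus $2$ factors of $J_0(p)$, handles cases (b) and (c). Case (d) reduces to case (b) via \cite[Proposition 4.6]{DHJOdensitydegree}.

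The main obstacle I expect is case (c) in the transition range roughly $150 < p < 300$, where both $X_0(p)$ and $X_0^+(p)$ still admit small degree maps and one must perform a prime-by-prime verification: computing $\gon_\Q X_0^+(p)$, identifying all rank-positive elliptic factors of $J_0^+(p)$, and determining precisely which admit a degree $\leq 3$ morphism from $X_0^+(p)$. This is a finite but delicate computation that distinguishes the eight sporadic primes in the list from the neighboring primes that are excluded.
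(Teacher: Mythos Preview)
Your overall framework—Kadets--Vogt dichotomy plus \Cref{thm_p<3000} to control maps to genus $\geq 2$ targets—matches the paper's strategy and correctly handles primes $p > 200$, where $g(X_0(p)) \geq 17$. The gap lies in the five residual primes $p \in \{157, 173, 193, 197, 199\}$ with $g(X_0(p)) \leq 16$; here the basic Kadets--Vogt bound $g \leq d(d-1)/2 + 1 = 16$ yields nothing, and your case split (a)--(d) does not close.

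Your claimed reduction ``case (d) reduces to case (b) via \cite[Proposition 4.6]{DHJOdensitydegree}'' is the problem: that proposition only says a Debarre--Fahlaoui normalization admits \emph{some} morphism to a positive rank elliptic curve, with no degree bound. Ruling out degree $\leq 6$ maps to elliptic curves (your (b)) therefore does not exclude a Debarre--Fahlaoui configuration. The paper handles $p=193$ because $J_0(193)$ has no elliptic factor whatsoever, so the DF obstruction vanishes and the sharper bound $g \leq (d-1)(d-2)/2 + 2 = 12$ of \Cref{kv_df_theorem} applies; it handles $p\in\{157,173,199\}$ by checking directly that $J_0(p)$ has no positive rank simple factor of dimension $\leq 3$ and invoking \Cref{translate_of_abelian_variety_thm} together with \Cref{DF_AV_dim}. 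But $p=197$ escapes both arguments: $J_0(197)$ contains the positive rank elliptic curve \lmfdbec{197}{a}{1} of modular degree $10$, so there \emph{is} a morphism to a positive rank elliptic curve (just not one of degree $\leq 6$), and there \emph{is} a small positive rank factor. The paper resolves $p=197$ by an explicit computation over $\F_3$ showing that $W_6^0 X_0(197)_{\F_3}$ contains no translate of $E_{\F_3}$, then lifting to $\Q$ via \cite[Lemma 4.8]{DHJOdensitydegree}. Your proposal has no mechanism for this case.
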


One of the key ways that \Cref{thm_p<3000} helps in the above proof is that the results of \cite{KadetsVogt} allow one to relate the study of degree $6$ points on $X_0(p)$ to maps on $X_0(p)$ to other curves whenever the genus of $X_0(p)$ is $\geq17$. See \Cref{KVcor} for a more precise statement tailored to degree $6$. In fact, the bound of $3000$ in \Cref{thm_p<3000} is enough to classify the points on $X_0(p)$ that come from maps to curves of genus $\geq 2$ for all degrees up to $25$. This classification is formalized in \Cref{thm:maps_for_small_degree_points}. 

The paper is organized as follows:
\begin{itemize}
    \item In \Cref{known_results_section} we present the known results on the curves $X_0(N)$ with infinitely many points of degree $d\leq5$.
    \item In \Cref{morphism_section} we determine all possible rational morphisms $f:X_0(p)\to Y$ for $p<\computationbound$, where $Y$ is a curve of genus $\geq2$, and give results on lower bounds for $\deg f$ for larger $p$.
    \item In \Cref{inf_deg6points_section} we prove that the curve $X_0(p)$ has infinitely many degree $6$ points for all levels $p$ listed in \Cref{deg6_thm}.
    \item In \Cref{finite_deg6points_section} we use the results from \Cref{morphism_section} to prove that all other curves $X_0(p)$ have only finitely many degree $6$ points over $\Q$. At the end of this section we also prove \Cref{deg6_thm} and give a short discussion about the points of degree $d\geq7$ on $X_0(p)$ and the level $p=193$.
\end{itemize}

One might ask whether it is possible to go further and try to determine all curves $X_0(p)$ with infinitely many points of degree $d$ for some $d\geq7$. However, a major obstacle for $d=7$ is that we were unable to determine all curves $X_0(p)$ with $\Q$-gonality equal to $7$. The smallest such prime is $p=163$ which has $\Q$-gonality either $7$ or $8$, see \cite[Table 2]{NajmanOrlic22}. We could not find a degree $7$ rational morphism to $\PP^1$ or disprove its existence. 

Moreover, it becomes much harder to determine all curves $X_0(N)$ with $\Q$-gonality equal to $d$ as $d$ grows larger. As the genus of $X_0(N)$ becomes larger, working with models and Riemann-Roch spaces over $\Q$ and $\F_p$ becomes much more difficult, making the computations increasingly unfeasible.

The results in this paper are based on \texttt{Magma} \cite{magma} and Sage computations. The computer code accompanying this paper can be found on \begin{center}
    \url{https://github.com/nt-lib/maps_on_X0_p}.
\end{center}

\section*{Acknowledgments}

We thank Alex Galarraga for pointing out a mistake regarding the level $p=193$ in the first version of the preprint. We also thank Filip Najman for his helpful comments.

The authors were supported by the Croatian Science Foundation under the project no. IP-2022-10-5008 and by the project “Implementation of cutting-edge research and its application as part of the Scientific Center of Excellence for Quantum and Complex Systems, and Representations of Lie Algebras“, PK.1.1.10.0004, European Union, European Regional Development Fund.

\section{Previous results on low degree points on modular curves.} \label{known_results_section}

Low degree points over $\Q$ on modular curves have been extensively studied. One of the reasons for this is that non-cuspidal points on modular curves represent isomorphism classes of elliptic curves. For example, rational points on curves $X_1(M,N)$ represent $\Q$-isomorphism classes of elliptic curves with torsion group $\Z/M\Z\times\Z/N\Z$, and rational points on curves $X_0(N)$ represent 
$\overline{\Q}$-isomorphism classes of elliptic curves together with a cyclic $N$-isogeny.

We present the results for the curve $X_0(N)$ as this paper is focused on it. All curves $X_0(N)$ with infinitely many degree $d$ points have been determined for $d\leq4$.

\begin{theorem}[{Bars \cite{Bars99}}]\label{thmquadratic}
    The modular curve $X_0(N)$ has infinitely many points of degree $2$ over $\Q$ if and only if
    $$N\in\{1-33,35-37,39-41,43,46-50,53,59,61,65,71,79,83,89,101,131\}.$$
\end{theorem}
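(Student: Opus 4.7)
The plan is to combine \Cref{harrissilverman} with the classifications of hyperelliptic $X_0(N)$ (Ogg \cite{Ogg74}) and bielliptic $X_0(N)$ (Bars \cite{Bars99}), then analyse the Mordell--Weil ranks of the relevant elliptic quotients. I split by the genus $g := g(X_0(N))$. For $g \in \{0, 1\}$, infinitely many quadratic points exist automatically: in genus $0$ one has $X_0(N) \cong \PP^1_\Q$ (since the cusp $\infty$ gives a rational point), so infinitely many quadratic points are trivial; and in genus $1$, the degree-$2$ map $X_0(N) \to \PP^1_\Q$ coming from a rational Weierstrass $x$-coordinate produces a quadratic point above every rational base-point, yielding infinitely many.

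For $g \geq 2$, \Cref{harrissilverman} reduces the question to whether $X_0(N)$ is hyperelliptic over $\Q$ or bielliptic over $\Q$ with a positive-rank elliptic quotient. Sufficiency for each $N$ on the list with $g \geq 2$ then follows by pulling back infinite point sets: if $X_0(N)$ is in Ogg's hyperelliptic list, pulling back $\PP^1(\Q)$ along the hyperelliptic map yields infinitely many quadratic points; for the remaining levels on the list, one exhibits a $\Q$-rational bielliptic quotient $X_0(N) \to E$ and pulls back $E(\Q)$, which is infinite precisely when $E$ has positive Mordell--Weil rank.

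For necessity, one must exclude every $N$ with $g \geq 2$ that is not on the stated list. From \Cref{harrissilverman} it suffices to verify that such $N$ is neither hyperelliptic over $\Q$ (immediate from Ogg's list) nor bielliptic over $\Q$ with a positive-rank elliptic quotient. This last condition is the main obstacle: one must enumerate all $\Q$-rational bielliptic involutions on $X_0(N)$ (typically arising from Atkin--Lehner involutions $w_d$ or products thereof), identify the corresponding elliptic factor $E \subseteq J_0(N)$ via the newform decomposition of $J_0(N)$, and consult rank tables (e.g.\ LMFDB \cite{lmfdb}) to check each candidate. The $N$ in the statement are precisely those for which either $X_0(N)$ is hyperelliptic over $\Q$ or the enumeration produces at least one elliptic quotient of positive rank; all other $N$ with $g \geq 2$ are ruled out by \Cref{harrissilverman}.
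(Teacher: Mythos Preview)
The paper does not prove this theorem; it is simply quoted from Bars \cite{Bars99} as a known result. So there is no ``paper's proof'' to compare against, and your proposal should be evaluated on its own merits.

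Your outline is the right strategy, but there is a genuine logical gap in the necessity direction. You write that ``from \Cref{harrissilverman} it suffices to verify that such $N$ is neither hyperelliptic over $\Q$ nor bielliptic over $\Q$ with a positive-rank elliptic quotient.'' That is not what \Cref{harrissilverman} says: its hypothesis is that $C$ is neither hyperelliptic \emph{nor bielliptic}, full stop. If $X_0(N)$ \emph{is} bielliptic but every $\Q$-rational degree-$2$ elliptic quotient has rank $0$, then \Cref{harrissilverman} is silent --- its hypothesis fails --- yet you still need to conclude finiteness. This situation is not vacuous: many levels in Bars' bielliptic list (e.g.\ $N=34,38,42,44,45,51,\ldots$) are bielliptic but do \emph{not} appear in the statement, precisely because all their elliptic quotients have rank $0$.

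The missing ingredient is the converse direction, which does not follow from Harris--Silverman alone but from Faltings' theorem applied to the image of $\Sym^2(C)$ in $J(C)$ (equivalently, \Cref{translate_of_abelian_variety_thm} for $d=2$): if $C$ has infinitely many quadratic points and is not hyperelliptic, then $W_2^0(C)$ contains a translate of a positive-rank abelian subvariety, necessarily an elliptic curve, which yields a degree-$2$ map to a positive-rank elliptic curve over $\Q$. Once you invoke this, your enumeration-and-rank-check argument goes through. Without it, the bielliptic rank-$0$ levels are not covered.
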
 

\begin{theorem}[{Jeon \cite{Jeon2021}}]\label{thmcubic}
    The modular curve $X_0(N)$ has infinitely many points of degree $3$ over $\Q$ if and only if
    $$N\in\{1-29,31,32,34,36,37,43,45,49,50,54,64,81\}.$$
\end{theorem}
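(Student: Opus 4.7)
The classification proceeds by separating the ``if'' and ``only if'' directions. For the ``if'' direction, for each listed level $N$, the plan is to exhibit an explicit source of infinitely many cubic points: either a degree $3$ rational map $X_0(N) \to \PP^1$ (so the pullback of $\PP^1(\Q)$ gives infinitely many cubic points), or a degree $3$ rational map to a positive-rank elliptic quotient $E$ of $J_0(N)$ (so one obtains infinitely many cubic points as preimages of $E(\Q)$). The trigonal levels are exactly those in the Hasegawa--Shimura classification, and for the remaining levels I would use the Modularity theorem together with the method of Derickx--Orli\'c \cite[Proposition 1.10]{DerickxOrlic23} to find explicit degree~$3$ morphisms to elliptic factors of $J_0(N)$ of positive rank.

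For the ``only if'' direction, the main tool is the Abramovich--Harris theorem in degree~$3$: if a curve $C/\Q$ of genus $\geq 2$ has infinitely many cubic points, then either $C$ is trigonal over $\Q$, or $C$ admits a degree~$3$ map to an elliptic curve $E/\Q$ with $E(\Q)$ infinite. (Debarre--Fahlaoui curves only obstruct the analogous statement for $d \geq 4$, so degree~$3$ is clean.) Combining this with Frey's theorem (\Cref{frey}), any $N$ with infinitely many cubic points on $X_0(N)$ must satisfy $\gon_\Q X_0(N) \leq 6$. The Abramovich--Kim--Sarnak lower bound on $\C$-gonality is linear in the index $[\SL_2(\Z) : \Gamma_0(N)]$, so only finitely many $N$ need to be considered; a standard computation bounds $N$ well within the ranges handled by Najman--Orli\'c \cite{NajmanOrlic22}.

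For the remaining finite list of $N$, the plan is case by case. First, discard all $N$ where $X_0(N)$ is trigonal over $\Q$ but \emph{not} already in the target list: by the Hasegawa--Shimura classification this leaves very few candidates, and each trigonal level is in fact already listed. Second, for each non-trigonal $N$ not in the target list, one must rule out a degree~$3$ map to a positive-rank elliptic quotient. Decompose $J_0(N)$ up to isogeny into $\Q$-simple factors, identify the elliptic factors, compute their Mordell--Weil ranks over $\Q$ (using the databases of modular forms), and for each positive-rank elliptic factor $E$ apply the modularity theorem together with \cite[Proposition 1.10]{DerickxOrlic23} to compute the minimal degree of a morphism $X_0(N) \to E$; if this minimal degree exceeds $3$ for every such $E$, the level is excluded. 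Finally, one verifies by direct Magma computation of symmetric cube points or of $J_0(N)(\Q)_{\mathrm{tors}}$ that no stray cubic families survive on the borderline levels.

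The main obstacle is the last step: running through each candidate $N \leq $ (the gonality-theoretic bound) and checking both trigonality over $\Q$ and the nonexistence of a degree~$3$ map to any positive-rank elliptic factor of $J_0(N)$. This is essentially a finite but delicate computation that mixes explicit models of $X_0(N)$, Riemann--Roch computations, and arithmetic data about the elliptic factors; the borderline cases (for instance $N$ where $X_0(N)$ is tetragonal with multiple positive-rank elliptic quotients of small conductor) are where one has to be most careful. Once these computations are assembled, the stated list matches exactly the $N$ surviving the elimination, completing both directions.
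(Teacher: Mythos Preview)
The paper does not prove this theorem; it is quoted as a known result from \cite{Jeon2021} in \Cref{known_results_section}, so there is no proof in this paper to compare your proposal against.

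Your outline is nonetheless a plausible sketch of how such a classification goes, and is close in spirit to Jeon's actual argument. Two small points: citing \cite{DerickxOrlic23} and \cite{NajmanOrlic22} is anachronistic since both postdate Jeon's paper; and your final catch-all paragraph about symmetric-cube computations and torsion is superfluous once the degree-$3$ dichotomy (trigonal or degree-$3$ map to a positive-rank elliptic curve) is in hand---the content of the proof is exactly the classification of trigonal and trielliptic $X_0(N)$, which Jeon carries out directly.
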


\begin{theorem}[{Hwang, Jeon; Derickx, Orlić \cite{Hwang2023, DerickxOrlic23}}]\label{quarticthm}
    The modular curve $X_0(N)$ has infinitely many points of degree $4$ over $\Q$ if and only if
    \begin{align*}
        N\in\{&1-75,77-83,85-89,91,92,94-96,98-101,103,104,107,111,\\
        &118,119,121,123,125,128,131,141-143,145,155,159,167,191\}.
    \end{align*}
\end{theorem}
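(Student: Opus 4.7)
The plan is to establish both directions of the biconditional, with the finite list of levels serving as the exceptional set. For the \emph{if} direction, we must exhibit infinitely many degree $4$ points on $X_0(N)$ for each $N$ in the list. The basic strategy is to find a rational morphism $\varphi:X_0(N)\to C$ of low degree where $C$ already has an infinite source of low-degree points. Concretely, I would combine three constructions: (a) when $\gon_\Q X_0(N)\leq 4$, the fibres of a gonal map over $\Q$-rational points of $\PP^1$ supply infinitely many points of degree $\leq 4$; a short case analysis (together with Riemann--Roch to split off rational divisors) upgrades these to genuine degree $4$ divisors. (b) When $X_0(N)$ admits a degree $2$ map to a curve $Y$ with infinitely many quadratic points (for instance when $Y$ is hyperelliptic or bielliptic over $\Q$, by \Cref{thmquadratic}), pulling back those quadratic points produces degree $4$ points. (c) When $X_0(N)$ admits a degree $2$ map to a positive rank elliptic curve $E/\Q$, pulling back the infinitely many rational points of $E$ gives infinitely many quadratic points, and then combining with another rational or quadratic divisor yields degree $4$ divisors. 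The Atkin--Lehner quotients $X_0(N)\to X_0(N)/w_d$ are the natural source of such degree $2$ maps, and for the listed $N$ one checks by consulting tables of $J_0(N)$-decompositions that at least one of (a)--(c) applies.

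For the \emph{only if} direction, I would rely on the Kadets--Vogt characterisation (\cite[Theorem 1.2 and Proposition 6.3]{KadetsVogt} in the analogous degree~$5$ case, adapted to degree~$4$) which asserts that a curve $C/\Q$ of genus $\geq 2$ has infinitely many degree $4$ points only if either $C$ is tetragonal over $\Q$, or $C$ admits a non-constant morphism (of suitably bounded degree) to a curve of genus $1$ of positive rank or to a curve of genus $\leq 1$, or $C$ is Debarre--Fahlaoui. Thus for each $N$ outside the list, it suffices to rule out each of these possibilities in turn. The tetragonality question is settled by \cite{NajmanOrlic22}, which gives the complete list of $N$ with $\gon_\Q X_0(N)\leq 4$. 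Maps to elliptic curves of positive rank fall into two cases by conductor: if $\mathrm{cond}(E)=N$, \Cref{modularity_thm} forces the map to factor through the optimal modular parametrisation, and the minimal degree can be computed; if $\mathrm{cond}(E)<N$, one applies the algorithm of \cite[Proposition 1.10]{DerickxOrlic23}. Degree $2$ maps to curves of genus $\geq 2$ come from Atkin--Lehner involutions, and these are enumerated for prime level via \Cref{thm_p<3000} and its extensions (which reduce the problem to the quotient $X_0(p)\to X_0^+(p)$).

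The hard part is the Debarre--Fahlaoui obstruction, as the paper itself notes that no general method is known to prove that a given curve is not Debarre--Fahlaoui. The workaround (following the proof strategy of the cited papers) is to exploit the only known necessary condition: a Debarre--Fahlaoui curve must admit a non-constant morphism to a positive rank elliptic curve \cite[Proposition 4.6]{DHJOdensitydegree}. Thus once we have ruled out all such elliptic quotients in the previous step, we automatically rule out Debarre--Fahlaoui sources as well. Finally, one must be careful that a priori a single source of degree $4$ points need not come from a single low-degree morphism; this is handled via the symmetric product formulation, passing from degree $4$ effective divisors of moving class to a map from $X_0(N)^{(4)}$ landing on a positive-dimensional subvariety of $J_0(N)$, and then invoking Faltings on $X_0(N)^{(4)}$ to restrict the geometry. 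Putting these together produces the finiteness statement for every $N$ outside the listed set, completing the classification.
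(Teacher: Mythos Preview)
This statement is not proved in the paper at all: it appears in \Cref{known_results_section} as a citation of prior work by Hwang--Jeon and Derickx--Orli\'c, and the paper simply quotes it as background. There is therefore no ``paper's own proof'' to compare your proposal against.

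That said, your sketch has a structural problem worth flagging. In the \emph{only if} direction you invoke \Cref{thm_p<3000} to control degree~$2$ maps to genus~$\geq 2$ curves, but that theorem applies only to \emph{prime} levels $p$, whereas \Cref{quarticthm} is a statement about all $N$, including the many composite levels in the list. For composite $N$ the Jacobian $J_0(N)$ typically has simple factors of multiplicity $>1$ (old forms), and the enumeration of abelian subvarieties underpinning \Cref{thm_p<3000} breaks down; the paper itself remarks on this limitation. Moreover, you appeal to \cite[Proposition~4.6]{DHJOdensitydegree} to dispose of the Debarre--Fahlaoui case, but that reference postdates \cite{Hwang2023,DerickxOrlic23}, so the original proofs cannot have used it; the actual arguments in those papers rely on more direct, level-by-level computations (Mordell--Weil sieves, explicit models, Jacobian rank computations) rather than the uniform Kadets--Vogt framework you outline. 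Your plan is a plausible high-level roadmap, but it is not the route taken in the cited references and would require substantial new input for composite $N$.
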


The problem of determining all curves $X_0(N)$ with infinitely many degree $5$ points has been partially solved and only finitely many levels $N$ remain unsolved. For all unsolved levels the curve $X_0(N)$ has infinitely many quartic points, making the arguments more complicated.

\begin{theorem}[{Derickx, Hwang, Jeon, Orlić \cite{DHJOdensitydegree}}]\label{quinticthm}
    The modular curve $X_0(N)$ has infinitely many points of degree $5$ and only finitely many points of degree $\leq4$ over $\Q$ if and only if $N=109$.
\end{theorem}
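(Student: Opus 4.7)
The plan is to combine Frey's theorem (\Cref{frey}) with the Kadets-Vogt classification of curves having infinitely many degree $d$ points for $d\leq 5$ from \cite{KadetsVogt}, together with the known classification in \Cref{quarticthm}. If $X_0(N)$ has infinitely many degree $5$ points then $\gon_\Q X_0(N)\leq 10$, and the Abramovich / Kim-Sarnak linear lower bounds on the $\C$-gonality of $X_0(N)$ reduce the problem to an explicit finite list of candidate levels $N$. Any $N$ appearing in \Cref{quarticthm} automatically violates the second condition of the theorem, so can be discarded.

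For the \enquote{if} direction at $N=109$, I would exhibit a map producing infinitely many degree $5$ points. The Jacobian $J_0(109)$ has an elliptic factor $E=\lmfdbec{109}{a}{1}$ of $\Q$-rank $1$; by \Cref{modularity_thm} there is a modular parametrisation $X_0(109)\to E$, and the enumeration technique of \cite[Proposition 1.10]{DerickxOrlic23} computes the set of achievable degrees of rational morphisms $X_0(109)\to E$. After checking that $5$ lies in this set, pullbacks of the infinitely many $\Q$-points of $E$ supply infinitely many degree $5$ points on $X_0(109)$. Since $109$ is not in the list of \Cref{quarticthm}, the curve has only finitely many points of degree $\leq 4$.

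For each remaining candidate $N\neq 109$, the Kadets-Vogt characterisation of density degree $5$ forces one of three structures: (a) a degree $5$ map $X_0(N)\to\PP^1$ over $\Q$, (b) a degree $5$ map from $X_0(N)$ to a positive rank elliptic curve over $\Q$, or (c) $X_0(N)$ is a Debarre-Fahlaoui curve. Case (a) can be ruled out using the $\Q$-gonality tables of \cite{NajmanOrlic22}, and case (c) reduces via \cite[Proposition 4.6]{DHJOdensitydegree} to the existence of \emph{some} non-constant morphism from $X_0(N)$ to a positive rank elliptic curve over $\Q$, so it is handled jointly with (b). The main obstacle is therefore case (b): for each elliptic factor $E$ of $J_0(N)$ of positive $\Q$-rank one must enumerate all possible degrees of rational morphisms $X_0(N)\to E$. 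When $\textup{cond}(E)=N$, \Cref{modularity_thm} pins these down immediately; the delicate regime is $\textup{cond}(E)\mid N$ with $\textup{cond}(E)<N$, which is handled by \cite[Proposition 1.10]{DerickxOrlic23}. Running this enumeration uniformly across the Frey-bounded list and verifying that only $N=109$ survives completes the proof.
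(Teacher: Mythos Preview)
This theorem is not proved in the present paper at all: it appears in \Cref{known_results_section} as a cited result from \cite{DHJOdensitydegree}, with no accompanying argument. There is therefore no ``paper's own proof'' to compare your proposal against; the paper simply imports \Cref{quinticthm} as background and uses it later (e.g.\ in \Cref{KVhighgenus} and \Cref{cor193}) to know that certain $X_0(p)$ have finitely many points of degree $\leq 5$.

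On the substance of your sketch: the ``if'' direction is simpler than you make it. The curve $X_0(109)$ has $\Q$-gonality exactly $5$ by \cite{NajmanOrlic22}, so a degree $5$ map to $\PP^1$ already exists and Hilbert irreducibility gives infinitely many degree $5$ points; there is no need to hunt for a degree $5$ map to an elliptic curve. For the ``only if'' direction your outline is broadly in the spirit of \cite{DHJOdensitydegree}, but it glosses over the genus constraint in \Cref{KVthm}: for $d=5$ the bound in case~(2) is $g\leq 11$, so the Kadets--Vogt dichotomy only bites once $g(X_0(N))\geq 12$, and the many levels with $g\leq 11$ (which include most of the actual work) require separate treatment --- typically via direct analysis of $W^0_5 X_0(N)$, rank-zero Jacobian factors, or explicit gonality computations, none of which your proposal addresses. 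The Debarre--Fahlaoui reduction you invoke also only rules out that case when \emph{no} morphism to a positive-rank elliptic curve exists; when such a morphism does exist (as happens for several candidate $N$), one still has to exclude the DF scenario by other means, which is where the finer arguments of \cite{DHJOdensitydegree} enter.
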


\section{Morphisms from $X_0(p)$ to curves of genus $\geq2$}\label{morphism_section}

\begin{definition}
    Let $B$ be a polarized abelian variety with polarization $\phi_B : B \to B^\vee$ and $f : A \hookrightarrow B$ an abelian subvariety. Then $\phi_A :=f^\vee\circ \phi_B \circ f : A \to A^\vee$ is called the induced polarization on $A$.
\end{definition}

\begin{lemma}\label{polarization_divisibility_lemma}
Let $f: X \to Y$ be a morphism of nice curves over $\C$ with $g(Y) \geq 1$. Let $A := f^*(J(Y)) \subseteq J(X)$ be the image of $J(Y)$ in $J(X)$. Let $\phi_A : A \to A^\vee$ be the polarization on $A$ induced by the principal polarization on $J(X)$. Then $\deg f$ is divisible by the exponent of $\ker \phi_A$.
\end{lemma}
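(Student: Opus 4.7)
The plan is to pull everything back to $J(Y)$ via $f^*$ and use the standard duality between pullback and pushforward. The key inputs are the two identities
\[
f_* \circ f^* = [\deg f] \quad\text{on } J(Y), \qquad (f^*)^\vee \circ \phi_X = \phi_Y \circ f_*,
\]
where $\phi_X$, $\phi_Y$ denote the principal polarizations on $J(X)$ and $J(Y)$. The first is the projection formula, and the second is the standard compatibility of pullback and Albanese with theta polarizations (see, e.g., Birkenhake--Lange).

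Now factor $f^* : J(Y) \to J(X)$ as $J(Y) \stackrel{\alpha}{\twoheadrightarrow} A \stackrel{\iota}{\hookrightarrow} J(X)$, where $\alpha$ is surjective and $\iota$ is a closed immersion. By definition $\phi_A = \iota^\vee \circ \phi_X \circ \iota$, so the composition $\alpha^\vee \circ \phi_A \circ \alpha : J(Y) \to J(Y)^\vee$ equals $(f^*)^\vee \circ \phi_X \circ f^*$. Applying the duality identity and then the projection formula gives
\[
\alpha^\vee \circ \phi_A \circ \alpha \;=\; \phi_Y \circ f_* \circ f^* \;=\; \phi_Y \circ [\deg f].
\]

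Since $\phi_Y$ is an isomorphism, the kernel of the right-hand side is exactly $J(Y)[\deg f]$, which has exponent $\deg f$. Since $\alpha$ is surjective its dual $\alpha^\vee$ is injective, so the kernel of the left-hand side is $\alpha^{-1}(\ker \phi_A)$; hence $\alpha$ maps $J(Y)[\deg f]$ surjectively onto $\ker \phi_A$. A surjection can only lower the exponent of a finite abelian group, so the exponent of $\ker \phi_A$ divides $\deg f$.

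I do not anticipate a genuine obstacle: the argument is entirely formal once one has the two identities above. The only mildly delicate point is that $f^*$ need not be injective, but this is handled automatically by allowing $\alpha$ to have a nontrivial kernel and noting only that $\alpha$ is surjective while $\alpha^\vee$ is injective, so kernels transport exactly as needed.
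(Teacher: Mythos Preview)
Your approach is essentially the same as the paper's: factor $f^*$ through $A$, identify $\alpha^\vee \circ \phi_A \circ \alpha$ with $\phi_Y \circ [\deg f]$, and compare kernels. However, one step is wrong. The assertion ``since $\alpha$ is surjective its dual $\alpha^\vee$ is injective'' is false for morphisms of abelian varieties: already $[n]:E\to E$ on an elliptic curve is surjective while $[n]^\vee=[n]$ has kernel $E[n]$. In your situation $\alpha:J(Y)\to A$ is an isogeny with kernel $\ker f^*\subseteq J(Y)[\deg f]$, which is typically nontrivial, and then $\ker\alpha^\vee$ is the (nontrivial) Cartier dual of $\ker\alpha$. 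Consequently the kernel of the left-hand side is $\alpha^{-1}\bigl(\ker(\alpha^\vee\circ\phi_A)\bigr)$, not $\alpha^{-1}(\ker\phi_A)$, and $\alpha$ maps $J(Y)[\deg f]$ onto $\ker(\alpha^\vee\circ\phi_A)$ rather than onto $\ker\phi_A$.

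The repair is immediate and is exactly what the paper does: use the obvious inclusion $\ker\phi_A\subseteq\ker(\alpha^\vee\circ\phi_A)$. Then
\[
\exp(\ker\phi_A)\ \big|\ \exp\bigl(\ker(\alpha^\vee\circ\phi_A)\bigr)\ \big|\ \exp\bigl(J(Y)[\deg f]\bigr)=\deg f,
\]
and the lemma follows. So your outline is correct and your identities are the right ones; only the injectivity claim for $\alpha^\vee$ must be replaced by the containment of kernels.
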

\begin{proof}
We let $d:=\deg f$ for simplicity. Since $A=f^*(J(Y))$, we can write $f^*=f_2\circ f_1$ with $f_1 : J(Y) \to A$ being just $f^*$ with restricted codomain and $f_2: A \hookrightarrow J(X)$ being the inclusion morphism. Then $f_* =f_1^\vee\circ f_2^\vee$ and we have the following equality of endomorphisms of $J(Y)$:  $$[d] = f_*\circ f^* = f_1^\vee\circ f_2^\vee \circ f_2\circ f_1= f_1^\vee\circ \phi_A \circ f_1.$$
Applying the kernel-cokernel exact sequence to the decomposition $[d] = (f_1^\vee\circ \phi_A) \circ f_1$ and using the surjectivity of $f_1$ we get a surjection $J(Y)[d] \xrightarrow{f_1} \ker (f_1^\vee \circ \phi_A)$.  Together with the inclusion $\ker (\phi_A) \subseteq \ker (f_1^\vee \circ \phi_A)$, this gives the desired divisibility $$\exp(\ker \phi_A)  \mid  \exp(\ker (f_1^\vee \circ \phi_A)) \mid  \exp(J(Y)[d]) =d.$$

\end{proof}

\begin{remark}
If $A \subseteq J_0(N)$, then $\ker \phi_A$ is called the modular kernel \cite[Section 10.6]{stein07} of $A$ and can be computed using SageMath. 
\end{remark}

\begin{corollary}\label{cor:not_curve_criterion}
Let $X$ be a nice curve over $\C$ of genus $g$ and let $A \subseteq J(X)$ be an abelian subvariety of dimension $g'\geq 2$. Assume that $2g-2 < \exp(\ker \phi_A) (2g'-2)$. Then there does not exist a morphism $f : X \to Y$ of nice curves such that $f^*(J(Y)) = A$.
\end{corollary}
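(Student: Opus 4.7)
The plan is to argue by contradiction: assume such a morphism $f : X \to Y$ exists and derive a violation of the hypothesis $2g-2 < \exp(\ker \phi_A)(2g'-2)$. Everything should fall out by combining the divisibility statement from \Cref{polarization_divisibility_lemma} with the Riemann--Hurwitz formula.

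First I would record that since $g(Y) \geq 1$ is required to even form $\phi_A$, the pullback $f^* : J(Y) \to J(X)$ is injective (its kernel is finite and connected since $J(Y)$ has no nontrivial finite connected subgroups), so $f^*$ realizes an isomorphism $J(Y) \to A$. In particular $g(Y) = \dim A = g' \geq 2$, which is exactly the regime where Riemann--Hurwitz gives a useful lower bound on the genus of the source.

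Next I would apply \Cref{polarization_divisibility_lemma} directly to $f$: it gives $\exp(\ker \phi_A) \mid \deg f$, hence
\[
\deg f \;\geq\; \exp(\ker \phi_A).
\]
Since $f$ is a non-constant map of nice curves over $\C$, Riemann--Hurwitz yields
\[
2g - 2 \;\geq\; (\deg f)\bigl(2g(Y) - 2\bigr) \;=\; (\deg f)(2g' - 2) \;\geq\; \exp(\ker \phi_A)(2g' - 2),
\]
where the last inequality uses $2g'-2 \geq 2 > 0$. This contradicts the standing assumption $2g-2 < \exp(\ker \phi_A)(2g'-2)$, completing the proof.

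I do not expect any serious obstacle: the only subtlety is the identification $g(Y) = g'$, which is standard. The strength of the criterion comes entirely from the preceding lemma, so the corollary itself is a short packaging step combining that divisibility with Riemann--Hurwitz.
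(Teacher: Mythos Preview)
Your approach is exactly the paper's: identify $g(Y)=g'$, invoke \Cref{polarization_divisibility_lemma} for the divisibility $\exp(\ker\phi_A)\mid\deg f$, and finish with Riemann--Hurwitz.

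One small correction: your justification that $f^*:J(Y)\to J(X)$ is \emph{injective} is not right. The kernel of $f^*$ is finite (being contained in $J(Y)[\deg f]$ via $f_*\circ f^*=[\deg f]$), but there is no reason for it to be connected, and in general it is not---e.g.\ for $[2]:E\to E$ on an elliptic curve, $f^*$ acts as multiplication by $2$ on $J(E)=E$ and has kernel $E[2]$. Fortunately you only use the conclusion $g(Y)=\dim A=g'$, which follows already from the kernel being finite; this is precisely how the paper argues it. So the proof stands once you replace ``injective'' by ``has finite kernel.''
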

\begin{proof}
    Suppose that we have such a morphism $f:X\to Y$. Then we must have $g(Y)= g'$. This is because $\dim J(Y)=g(Y)$, $f^*(J(Y)) = A$, and because $f^*$ has finite kernel (it is contained in the kernel of $[d]$ since $[d]=f_*\circ f^*$). Therefore $A$ and $J(Y)$ must have the same dimension. We also get $\exp(\ker \phi_A) \mid \deg f$ from \Cref{polarization_divisibility_lemma}. However, the Riemann-Hurwitz formula gives us \[2g-2\geq\deg f\cdot(2g(Y)-2)\geq \exp(\ker \phi_A)\dot(2g'-2),\] which is a contradiction with the assumption in this corollary.
\end{proof}

We now present the proof of \Cref{thm_p<3000}. The crucial result we use there is the following lemma.

\begin{lemma}\label{J0p_finite_subvarieties}
    Let $p$ be a prime. Then the Jacobian $J_0(p)$ has only finitely many abelian subvarieties over $\Q$.
\end{lemma}

\begin{proof}
    Since $p$ is prime, every simple abelian variety in $J_0(p)_\Q$ is new and therefore occurs with multiplicity $1$ by \cite[Theorem 9.4]{stein07} and \cite[Theorem 5.8.3]{modular}. Thus \[J_0(p)_\Q \sim A_1\oplus\ldots\oplus A_k\] with the $A_i$ pairwise non-isogenous simple abelian varieties over $\Q$. If $A \subseteq J_0(p)$ is an abelian subvariety, then, since all the $A_i$ occur with multiplicity $1$, there will be a finite subset $S \subseteq \set{1,\ldots,k}$ such that $A =\footnote{We actually mean equality here, not just isogenous.} \sum_{i \in S} A_i$. In particular, $J_0(p)$ will have exactly $2^k$ different abelian subvarieties over $\Q$.
\end{proof}

\begin{remark}\label{J0N_infinite_subvarieties}
    This result does not hold for composite levels $N$. We have a decomposition \[J_0(N)_\Q \sim A_1^{n_1}\oplus\ldots\oplus A_k^{n_k},\] where the level of $A_i$ is $M_i\mid N_i$ and $n_i=\textup{number of divisors of }\frac{N_i}{M_i}$. Hence, for all but finitely many composite levels $N$ the Jacobian $J_0(N)$ will have isogeny factors of multiplicity $\geq2$. However, if $A$ is a simple abelian variety over $\Q$ and $n\geq2$, then $A^n$ has infinitely many abelian subvarieties over $\Q$ isogenous to $A$. 
\end{remark}

\begin{proof}[Proof of \Cref{thm_p<3000}]
By \Cref{J0p_finite_subvarieties}, there exist pairwise non-isogenous simple abelian varieties $A_1,\ldots,A_k$ over $\Q$ such that $J_0(p)_\Q \sim A_1\oplus\ldots\oplus A_k$ and $J_0(p)$ has exactly $2^k$ different abelian subvarieties over $\Q$. Using SageMath we enumerated all abelian subvarieties $A \subseteq J_0(p)$ for $p \leq \computationbound$.

For each of the $2^k$ subvarieties $A$ we check the possibilities for the curve $Y$ and morphism $f$. We divide the proof into cases depending on $A:=f^*(J(Y))$. Let $g$ be the genus of $X_0(p)$ and $g'$ be the dimension of $A$. Notice that $g\geq g(Y)\geq2$.

{\bf Case 1:} $2g-2 < \exp(\ker \phi_A) (2g'-2)$. In this case the abelian variety $A$ does not come from a curve because of \Cref{cor:not_curve_criterion}.

{\bf Case 2:} $2g-2 < 3(2g'-2)$. As mentioned before, we must have $g(Y)=g'$. The Riemann-Hurwitz formula gives us \[2g-2\geq\deg f\cdot(2g'-2)\] which forces $\deg f =2$. This means that there is an involution $\iota$ of $X_0(p)$ such that $Y = X_0(p)/\iota$. By \cite[Theorem 0.1]{kenkumomose1988automorphism}, for $p\neq37$ the only non-trivial automorphism of $X_0(p)$ is $w_p$. Furthermore, the automorphism group of $X_0(37)$ is equal to $(\Z/2\Z)^2$ and the three quotient curves $X_0(37)/\iota$ are $\PP^1$ and two elliptic curves (LMFDB labels \lmfdbec{37}{a}{1}, \lmfdbec{37}{b}{2}). This shows that $\iota=w_p$, and hence $f$ is isomorphic to the quotient map $X_0(p) \to X_0^+(p) := X_0(p)/w_p$.

{\bf Case 3:} $A=\pi^*(J_0(p)^+)$ where $\pi : X_0(p) \to X_0^+(p)$ is the quotient map. Since $A=f^*(J(Y))=\pi^*(J_0(p)^+)$, \cite[Proposition 2.11 (i)]{BakerGonzalezPoonen2005} shows that in this case there exist non-constant morphisms $u_1:Y\to X_0^+(p), u_2:X_0^+(p)\to Y$ such that $\pi=u_1\circ f, f=u_2\circ\pi$. Hence $\pi=u_1\circ u_2\circ\pi$, implying that $\deg (u_1\circ u_2)=1$, so $f$ and $\pi$ are isomorphic. 

{\bf Case 4:} The remaining cases. These cases are listed in \Cref{tab:remaining}. In the last column we give the Riemann-Hurwitz bound on $\deg f$ for expositional reasons.

This table has been computed using the \texttt{lower\_genus\_candidates\_parallel(N)} function with $N=3000$ (available on GitHub). This function runs over all primes $<N$ and discarding those falling under cases 1 to 3. The output of this Sage program can be found in the file \texttt{logs/run.txt}.

\begin{center}
\begin{longtable}{|c|c|c|c|c|c|}
\caption{Abelian subvarieties $A\subseteq J_0(p)$ not falling into the first $3$ cases.}
\label{tab:remaining}\\

\hline
$p$ & $g:=g(X_0(p))$ & $g(X_0^+(p))$ & $g':=\dim A$ & $\exp(\ker \phi_A)$ & $\left\lfloor\displaystyle\frac{2g-2}{2g'-2}\right\rfloor$\\
    \hline

$223$ & $18$ & $6$ & $2$ & $14$ & $17$\\
$227$ & $19$ & $5$ & $2$ & $14$ & $18$\\
$359$ & $30$ & $6$ & $2$ & $16$ & $29$\\
$383$ & $32$ & $8$ & $2$ & $22$ & $31$\\
$491$ & $41$ & $12$ & $2$ & $38$ & $40$\\
$809$ & $67$ & $26$ & $2$ & $24$ & $66$\\
$929$ & $77$ & $30$ & $2$ & $40$ & $76$\\
$1409$ & $117$ & $50$ & $2$ & $48$ & $116$\\
 
    \hline

\end{longtable}

\end{center}

As can be seen from the table, we have $g':=\dim A=2$ in all these cases. Therefore, if $A$ came from a curve $Y$ we must also have $g(Y)=g'=2$ (by the proof of \Cref{cor:not_curve_criterion}) and hence $Y$ is a hyperelliptic curve. We can now check in \cite[Tables 1,2]{GonzalezJimenez2002} and \cite[Theorem 1.3, Table 4]{BakerGonzalezPoonen2005} that there are no non-constant morphisms from $X_0(p)$ to a genus $2$ curve for these levels $p$.

\end{proof}

\begin{corollary}\label{thm_p_plus<3000}
Let $p<\computationbound$ be a prime. Then the quotient curve $X_0^+(p)$ does not admit a non-constant rational morphism to a curve of genus $\geq 2$.
\end{corollary}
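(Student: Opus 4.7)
The plan is a direct reduction to \Cref{thm_p<3000} via precomposition with the quotient map. Suppose for contradiction that $g : X_0^+(p) \to Y$ is a non-constant $\Q$-rational morphism of degree $>1$ to a nice curve $Y/\Q$ with $g(Y) \geq 2$. Let $\pi : X_0(p) \to X_0^+(p)$ denote the canonical degree-$2$ quotient, and form the composite $f := g \circ \pi : X_0(p) \to Y$. Then $f$ is a non-constant $\Q$-morphism of degree $2 \deg g \geq 4$ whose codomain has genus $\geq 2$, so it satisfies the hypotheses of \Cref{thm_p<3000}.

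Applying \Cref{thm_p<3000} to $f$ yields an isomorphism $\phi : X_0^+(p) \xrightarrow{\sim} Y$ such that $\phi \circ \pi = f = g \circ \pi$. Since $\pi$ is (scheme-theoretically) surjective, this forces $g = \phi$; but then $\deg g = 1$, contradicting the assumption $\deg g >1$. Hence no such $g$ exists, as required.

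There is no real obstacle: all of the content has already been absorbed into \Cref{thm_p<3000}, and the corollary only requires one to transport that theorem up along the degree-$2$ cover $\pi$. The one point worth flagging is an implicit convention in the statement: since the identity $X_0^+(p) \to X_0^+(p)$ is itself a non-constant morphism to a curve of genus $\geq 2$ whenever $g(X_0^+(p)) \geq 2$, the corollary must be read as asserting the non-existence of such morphisms of degree strictly greater than $1$, in keeping with the hypothesis ``$d>1$'' of \Cref{thm_p<3000}.
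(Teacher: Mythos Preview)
Your argument is correct and is exactly the intended one: the paper states the corollary immediately after the proof of \Cref{thm_p<3000} without further justification, and your composition with $\pi$ is the obvious way to deduce it. Your remark about the implicit ``degree $>1$'' convention is also well taken; the statement as literally written would otherwise be violated by the identity on $X_0^+(p)$ whenever $g_0^+(p)\geq 2$, and the application in \Cref{thm:maps_for_small_degree_points_plus} confirms that this is the intended reading.
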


\begin{remark}
The proof of \Cref{thm_p<3000} uses computational methods and is based on the ability to enumerate all abelian subvarieties of $J_0(p)$. This heavily uses the fact that $p$ is prime because all simple abelian varieties $A$ occur with multiplicity $1$ in $J_0(p)$.

The computationally hardest level was $p=2089$. In that case the Jacobian $J_0(2089)$ has $10$ simple factors. The decomposition is
\begin{align*}
    J_0(2089)_\Q \sim & \ E_{1,\dim=1}\oplus E_{2,\dim=1}\oplus E_{3,\dim=1}\oplus E_{4,\dim=1}\oplus E_{5,\dim=1}\oplus\\
    &\oplus A_{1,\dim=2},\oplus A_{2,\dim=2}\oplus A_{3,\dim=6}\oplus A_{4,\dim=67}\oplus A_{5,\dim=91},
\end{align*}
as can be seen on LMFDB \cite{lmfdb}. \begin{center}
    \url{https://www.lmfdb.org/ModularForm/GL2/Q/holomorphic/?level=2089&weight=2&char_order=1}
\end{center} However, we can see that two of those factors are of dimension $67$ and $91$, respectively, and can therefore be eliminated as in Case 2 of the proof. The computation for $p=2089$ took around $4$ days. The other levels had fewer simple factors and the computations were much faster, no more than several minutes.

For composite levels $N$ the Jacobian $J_0(N)$ will often contain simple factors that occur with higher multiplicity. If $J_0(N)$ contains an isogeny factor $A^n$ with multiplicity $n>1$, then, in fact, $J_0(p)$ will contain infinitely many distinct abelian subvarieties isogenous to $A$. However, the computations in the proof of \Cref{thm_p<3000} could easily be generalized to composite levels $N$ if we restrict ourselves to modular parametrizations $f : X_0(N) \to Y$ that are new of level $N$, i.e., those for which $f^*(J(Y))\subseteq J_0(N)_{\textup{new}}$, see \cite[Introduction]{BakerGonzalezPoonen2005} for further results on new modular curves. 
\end{remark}

We now move on to the proof of \Cref{thm_jacobian_decomposition}. First, we present several auxiliary results regarding the genus of the quotient curve $X_0^+(N)$. From now on, we will denote the genus of $X_0(N)$ by $g_0(N)$ and the genus of $X_0^+(N)$ by $g_0^+(N)$ for simplicity.

\begin{lemma}[{\cite[Section 2]{JEON2018319}, \cite[Remark 2]{FurumotoHasegawa1999}}]\label{quotient_genus_formula}
    The genus of the quotient curve $X_0^+(N)$ is equal to \[g_0^+(N)=\frac{1}{4}(2g_0(N)+2-\alpha_Nh(-4N)),\] where $h(-4N)$ is the class number of primitive quadratic forms of discriminant $-4N$ and \[\alpha_N= \begin{cases}
        & 2, \textup{ if } N\equiv 7\pmod{8},\\
        & \frac{4}{3}, \textup{ if } N\equiv 3\pmod{8} \textup{ and } N>3,\\
        & 1, \textup{ otherwise}.
    \end{cases}\]
\end{lemma}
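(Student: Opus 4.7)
The plan is to apply the Riemann-Hurwitz formula to the degree $2$ quotient $\pi : X_0(N) \to X_0^+(N) := X_0(N)/\langle w_N \rangle$. Since $\pi$ is an involution quotient, it is ramified (with index $2$) precisely at the fixed points of $w_N$, so Riemann-Hurwitz gives
$$2g_0(N) - 2 = 2\bigl(2g_0^+(N) - 2\bigr) + F,$$
where $F$ denotes the number of geometric fixed points of $w_N$. Rearranging yields $g_0^+(N) = (2g_0(N) + 2 - F)/4$, so the problem reduces to proving the identity $F = \alpha_N\, h(-4N)$.

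The second step is to identify the fixed locus of $w_N$ moduli-theoretically. Because $w_N$ permutes the cusps of $X_0(N)$ in pairs (for $N > 1$), every fixed point of $w_N$ corresponds to a non-cuspidal pair $(E,C)$ with $E$ an elliptic curve and $C \subset E$ a cyclic subgroup of order $N$ satisfying $(E, C) \cong (E/C, E[N]/C)$. Unwinding this isomorphism produces an endomorphism $\alpha \in \End(E)$ of degree $N$ with $\ker\alpha = C$; since $C$ is a cyclic subgroup of order $N$, $\alpha$ cannot be a multiplication map, which forces $\alpha + \bar\alpha = 0$ and hence $\alpha^2 = -N$ in $\End(E)$. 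Thus $E$ has complex multiplication by some order containing $\Z[\sqrt{-N}]$ inside $K := \Q(\sqrt{-N})$.

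The third step counts such pairs by class numbers. By standard CM theory, for each order $\O$ with $\Z[\sqrt{-N}] \subseteq \O \subseteq \O_K$, the isomorphism classes of pairs $(E, C)$ with $\End(E) = \O$ and $C$ arising from a norm-$N$ element of $\O$ are in bijection with the class group $\Cl(\O)$. When $N \not\equiv 3 \pmod 4$ the ring $\Z[\sqrt{-N}] = \O_K$ is already maximal, giving $F = h(-4N)$ and $\alpha_N = 1$. When $N \equiv 3 \pmod 4$, both the non-maximal order $\Z[\sqrt{-N}]$ (discriminant $-4N$) and the maximal order $\O_K = \Z[(1 + \sqrt{-N})/2]$ (discriminant $-N$) contribute, so $F = h(-4N) + h(-N)$. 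The classical conductor formula
$$h(-4N) = 2\, h(-N)\Bigl(1 - \tfrac{1}{2}\chi_{-N}(2)\Bigr),$$
combined with $\chi_{-N}(2) = +1$ for $N \equiv 7 \pmod 8$ and $\chi_{-N}(2) = -1$ for $N \equiv 3 \pmod 8$, then yields $h(-N) = h(-4N)$ in the first case (so $F = 2 h(-4N)$ and $\alpha_N = 2$) and $h(-N) = h(-4N)/3$ in the second (so $F = \tfrac{4}{3} h(-4N)$ and $\alpha_N = \tfrac{4}{3}$), provided $N > 3$ so that both orders have unit group $\{\pm 1\}$.

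The main obstacle is the rigorous bookkeeping in the class-number count: verifying the bijection with $\Cl(\O)$ without double-counting, correctly handling the automorphism groups of the underlying CM elliptic curves, and isolating the exceptional behaviour at the very small levels (where either $j(E) \in \{0, 1728\}$ brings in extra units or the cusps behave differently). Once $F = \alpha_N\, h(-4N)$ is established, substituting into the Riemann-Hurwitz identity in the first paragraph gives the desired closed formula for $g_0^+(N)$.
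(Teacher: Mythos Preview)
The paper does not supply a proof of this lemma; it is quoted directly from the cited references \cite{JEON2018319,FurumotoHasegawa1999}. Your outline is the standard argument (going back to Ogg and Fricke) and is correct: Riemann--Hurwitz for the double cover reduces the problem to counting fixed points of $w_N$, and these are enumerated by class numbers of the orders containing $\Z[\sqrt{-N}]$; your conductor-formula computation converting $h(-N)$ into a multiple of $h(-4N)$ matches exactly what the paper itself records in the comment block following \Cref{cox_class_number}.

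One step deserves a little more care. The sentence ``since $C$ is cyclic, $\alpha$ cannot be multiplication by an integer, which forces $\alpha+\bar\alpha=0$'' is not valid as written: a non-integer endomorphism of norm $N$ need not have trace zero. The trace-zero condition really comes from the involution property $w_N^2=\id$. Concretely, working on the upper half-plane, $w_N$ is represented by $\left(\begin{smallmatrix}0&-1\\N&0\end{smallmatrix}\right)$, and a point $z\in\mathcal H$ is fixed modulo $\Gamma_0(N)$ exactly when $\gamma z=z$ for some $\gamma\in\left(\begin{smallmatrix}0&-1\\N&0\end{smallmatrix}\right)\Gamma_0(N)$; such $\gamma$ has $\det\gamma=N$ and $\trace\gamma=0$, so $z$ satisfies a reduced binary quadratic form of discriminant $-4N$ or $-N$. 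This yields $F=h(-4N)$ when $-N\equiv 2,3\pmod 4$ and $F=h(-4N)+h(-N)$ when $-N\equiv 1\pmod 4$, after which your case split on $N\bmod 8$ finishes the computation. You already flag this bookkeeping as the main obstacle, so the plan is sound; just replace the trace-zero justification by the matrix/involution argument above.
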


\begin{lemma}[{\cite[Corollary 7.28]{cox}, see also \cite[Section A.1]{DerickxNajman2023}}]\label{cox_class_number}
    Let $d<0$ be a fundamental discriminant and $m$ a positive integer. Then we have \[h(dm^2)=\frac{h(d)m}{w_{d,m}}\prod_{p\mid m}\left(1-\left(\frac{d}{p}\right)\frac{1}{p}\right),\] where $\left(\frac{d}{p}\right)$ is the Kronecker symbol, $w_{d,m}=3$ if $d=-3$ and $m\neq1$, $w_{d,m}=2$ if $d=-4$ and $m\neq1$, and $w_{d,m}=1$ otherwise.
\end{lemma}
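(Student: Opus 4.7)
The plan is to reinterpret the class number $h(dm^2)$ as the order of the Picard group of the non-maximal order $\O_m := \Z + m\O_K$ of conductor $m$ inside the ring of integers $\O_K$ of $K := \Q(\sqrt{d})$; since $d$ is a fundamental discriminant, $\O_K$ has discriminant $d$ and $\O_m$ has discriminant $dm^2$, so that $h(d) = |\Pic(\O_K)|$ and $h(dm^2) = |\Pic(\O_m)|$. The central tool is the standard exact sequence
\[
1 \longrightarrow \O_K^\times/\O_m^\times \longrightarrow (\O_K/m\O_K)^\times / (\Z/m\Z)^\times \longrightarrow \Pic(\O_m) \longrightarrow \Pic(\O_K) \longrightarrow 1,
\]
which one obtains by analysing invertible $\O_m$-ideals prime to $m$ and comparing them to $\O_K$-ideals. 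Taking orders and solving for $|\Pic(\O_m)|$ immediately produces a formula of the claimed shape.

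The first main computation is to identify the middle group. By the Chinese Remainder Theorem this reduces to the case of a single prime power $p^e \parallel m$, and the Kronecker symbol $(d/p)$ records the splitting behaviour of $p$ in $\O_K$. A short case analysis (split, inert, ramified) yields in each case
\[
\frac{|(\O_K/p^e\O_K)^\times|}{|(\Z/p^e\Z)^\times|} = p^{e}\left(1-\left(\frac{d}{p}\right)\frac{1}{p}\right),
\]
and multiplying over the prime divisors of $m$ contributes precisely the factor $m\prod_{p\mid m}\bigl(1-(d/p)/p\bigr)$ in the claimed formula.

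The second input is the unit index $[\O_K^\times:\O_m^\times]$. For $m>1$ one has $\O_m^\times = \{\pm 1\}$, so this index equals $|\O_K^\times|/2$, which is $1$ when $d<-4$, $2$ when $d=-4$, and $3$ when $d=-3$; these are precisely the values of $w_{d,m}$ in the statement. Assembling the three ingredients yields the formula. The main obstacle --- really the only delicate point --- is verifying the exactness of the sequence above, in particular the existence of the connecting map $(\O_K/m\O_K)^\times/(\Z/m\Z)^\times \to \Pic(\O_m)$ and the identification of its kernel with $\O_K^\times/\O_m^\times$. This requires checking that an invertible $\O_m$-ideal prime to $m$ is principal precisely when it has a generator congruent to a rational integer modulo $m$, a standard but slightly fiddly calculation relying on the fact that $m\O_K$ is the conductor ideal $\{\alpha \in \O_K : \alpha\O_K \subseteq \O_m\}$.
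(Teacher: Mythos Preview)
The paper does not give a proof of this lemma at all: it is stated with a citation to \cite[Corollary 7.28]{cox} (and a secondary reference) and then used as a black box in the proof of \Cref{class_number_ub_corollary}. So there is no ``paper's own proof'' to compare against.

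That said, your sketch is correct and is essentially the standard argument one finds in Cox. The exact sequence you write down is precisely the content of \cite[Theorem 7.24/Proposition 7.20]{cox}, and the local computation of $|(\O_K/p^e\O_K)^\times|/|(\Z/p^e\Z)^\times|$ via the splitting type of $p$ is how Cox packages the product over $p\mid m$. Your identification of $w_{d,m}$ with the unit index $[\O_K^\times:\O_m^\times]$ is also the standard one. One minor point: your claim that $\O_m^\times=\{\pm 1\}$ for $m>1$ deserves a word of justification in the cases $d=-3,-4$ (one must check that the extra roots of unity $\zeta_6,\zeta_4$ do not lie in $\Z+m\O_K$ when $m>1$), but this is immediate once stated. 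Likewise, the phrase ``standard but slightly fiddly'' for the exactness at the middle term is accurate; if you wanted to be self-contained you would spell out that every invertible $\O_m$-ideal class has a representative coprime to $m$, and that extension/contraction of such ideals to $\O_K$ sets up the surjection $\Pic(\O_m)\to\Pic(\O_K)$ with the stated kernel.
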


\begin{lemma}[{\cite[Corollary 1]{Ramar2001}}]\label{class_number_upper_bound}
    Let $d<-4$ be a fundamental discriminant. Then we have \[h(d)\leq\frac{\sqrt{|d|}}{2\pi}(\log |d|+5-2\log6).\]
\end{lemma}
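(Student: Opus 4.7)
The bound is Ramar\'e's explicit refinement of the classical Landau inequality for class numbers of imaginary quadratic orders, so the natural plan is to reduce it to an explicit upper bound on a Dirichlet $L$-value via the analytic class number formula. Since $d<-4$ is a fundamental discriminant, the field $\Q(\sqrt{d})$ has unit group $\{\pm 1\}$, and Dirichlet's class number formula reads
\[
h(d) = \frac{\sqrt{|d|}}{\pi}\, L(1,\chi_d),
\]
where $\chi_d$ is the Kronecker character of conductor $|d|$. The lemma is therefore equivalent to the explicit $L$-bound
\[
L(1,\chi_d) \leq \tfrac{1}{2}\bigl(\log|d| + 5 - 2\log 6\bigr),
\]
which is a statement about real primitive characters alone and no longer mentions class numbers.

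To prove this $L$-bound I would truncate the Dirichlet series at a parameter $x$ and estimate the two pieces separately. Writing
\[
L(1,\chi_d) = \sum_{n\leq x}\frac{\chi_d(n)}{n} + \int_x^\infty \frac{S(t)}{t^2}\,dt, \qquad S(t):=\sum_{n\leq t}\chi_d(n),
\]
one bounds the head trivially by $\log x + \gamma + O(1/x)$ and bounds the tail via an explicit form of the P\'olya--Vinogradov inequality $|S(t)|\leq C\sqrt{|d|}\log|d|$. Choosing $x$ of order $\sqrt{|d|}$ already balances the two contributions and yields a bound of the shape $L(1,\chi_d) \leq \tfrac{1}{2}\log|d| + c$ for an absolute constant $c$, which substituted back into the class number formula gives the correct qualitative shape $h(d)\leq \tfrac{\sqrt{|d|}}{2\pi}(\log|d| + O(1))$.

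The main obstacle is not the qualitative shape but pinning down the sharp constant $5-2\log 6$. Doing so requires replacing P\'olya--Vinogradov by its sharp explicit version, exploiting positivity of $L(1,\chi_d)$ to discard non-negative contributions, and carefully balancing the head and tail after a partial summation performed with the best available character sum estimate. This optimisation is exactly the computation carried out in \cite{Ramar2001}; since the lemma is used in this paper only as a black box to bound the class numbers appearing through \Cref{quotient_genus_formula} and \Cref{cox_class_number}, I would invoke the statement directly rather than reproduce the full analysis.
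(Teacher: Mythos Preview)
Your proposal is correct and matches the paper's approach: the paper gives no proof at all for this lemma, treating it purely as a citation of \cite[Corollary 1]{Ramar2001}, and your final paragraph does exactly the same after a sound expository reduction via the analytic class number formula $h(d)=\frac{\sqrt{|d|}}{\pi}L(1,\chi_d)$ for $d<-4$.
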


\begin{corollary}\label{class_number_ub_corollary}
    Let $p\geq5$ be a prime. Then we have \[h(-4p)\leq
    \frac{3\sqrt{p}}{2\pi}(\log p+5-2\log 6).\]
\end{corollary}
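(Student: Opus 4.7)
The plan is to split the argument into two cases according to $p \bmod 4$, using that $-4p$ is a fundamental discriminant precisely when $p \equiv 1 \pmod 4$. In the complementary case $-p$ is itself fundamental and one first has to reduce $h(-4p)$ to $h(-p)$ via \Cref{cox_class_number}.

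For $p \equiv 3 \pmod 4$, I would apply \Cref{cox_class_number} with $d = -p$ and $m = 2$, obtaining
\[h(-4p) = 2h(-p)\left(1 - \tfrac{1}{2}\bigl(\tfrac{-p}{2}\bigr)\right),\]
where $(-p/2)$ is the Kronecker symbol. A short case split on $p \bmod 8$ (namely $(-p/2) = -1$ when $p \equiv 3 \pmod 8$, which yields $h(-4p) = 3h(-p)$, and $(-p/2) = +1$ when $p \equiv 7 \pmod 8$, which yields $h(-4p) = h(-p)$) shows that in either subcase $h(-4p) \leq 3h(-p)$. Since $-p < -4$ for every prime $p \geq 5$, \Cref{class_number_upper_bound} gives $h(-p) \leq \frac{\sqrt{p}}{2\pi}(\log p + 5 - 2\log 6)$, and multiplication by $3$ delivers the claimed inequality.

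For $p \equiv 1 \pmod 4$ the discriminant $-4p$ is itself fundamental and satisfies $-4p < -4$, so \Cref{class_number_upper_bound} applies directly and yields
\[h(-4p) \leq \tfrac{\sqrt{p}}{\pi}\bigl(\log p + 2\log 2 + 5 - 2\log 6\bigr).\]
Comparing this with the target bound $\frac{3\sqrt{p}}{2\pi}(\log p + 5 - 2\log 6)$ reduces, after clearing $\frac{\sqrt{p}}{2\pi}$, to the elementary inequality $\log p \geq 4\log 2 + 2\log 6 - 5 \approx 1.356$, which holds for all primes $p \geq 5$ since $\log 5 > 1.6$. No serious obstacle is expected here; the only care needed is in evaluating the Kronecker symbol $(-p/2)$ correctly in the first case and checking the numerical constant in the second.
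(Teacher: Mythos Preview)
Your proposal is correct and follows essentially the same approach as the paper: split according to $p \bmod 4$, reduce $h(-4p)$ to $h(-p)$ via \Cref{cox_class_number} when $p\equiv 3\pmod 4$, and apply \Cref{class_number_upper_bound} directly to $-4p$ when $p\equiv 1\pmod 4$. You are simply more explicit than the paper about the Kronecker symbol values and the final numerical check, but the argument is the same.
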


\begin{proof}
    If $p\equiv3\pmod{4}$, then $-p$ is a fundamental discriminant. By \Cref{cox_class_number} we have $h(-4p)=2h(-p)(1\pm\frac{1}{2})\leq 3h(-p)$ and by \Cref{class_number_upper_bound} we have \[h(-4p)\leq 3h(-p)\leq\frac{3\sqrt{p}}{2\pi}(\log p+5-2\log6).\]
     If $p\equiv1\pmod{4}$, then $-4p$ is a fundamental discriminant and we can apply \Cref{class_number_upper_bound} to get \[h(-4p)\leq\frac{\sqrt{4p}}{2\pi}(\log 4p+5-2\log 6)\leq \frac{3\sqrt{p}}{2\pi}(\log p+5-2\log6)\]
     as well. It can be easily checked that the last inequality holds for all $p\geq5$.
\end{proof}

\begin{remark}
    Since $\prod_{p\mid m}\left(1-\left(\frac{d}{p}\right)\frac{1}{p}\right)\leq\prod_{p\mid m}\left(1+\frac{1}{p}\right)<c\log m$ for some computable constant $c>0$, it is possible to obtain a similar upper bound on $h(d)$ for general $d<0$.
\end{remark}

\begin{proof}[Proof of \Cref{thm_jacobian_decomposition}]
    We may suppose that $p>\computationbound$ due to \Cref{thm_p<3000}. We define $g_0(p):=g(X_0(p))$. From \Cref{quotient_genus_formula} and \Cref{class_number_ub_corollary} we conclude that \begin{align*}
        \frac{1}{4}(2g_0(p)+2-2h(-4p))&\leq g_0^+(p)\leq \frac{1}{4}(2g_0(p)+2-h(-4p))\\
        \implies \frac{g_0(p)}{2}+\frac{1}{2}-\frac{3\sqrt{p}}{4\pi}(\log p+5-2\log 6)&\leq g_0^+(p)\leq\frac{g_0(p)}{2}.
    \end{align*}

    Moreover, if $p\geq5$ is a prime, then the genus formula for $X_0(p)$ is \[g_0(p)=\begin{cases}
        \displaystyle\frac{p-13}{12} &: p\equiv 1\pmod{12},\\ \
        \displaystyle\frac{p-5}{12} &: p\equiv 5\pmod{12},\\ \
        \displaystyle\frac{p-7}{12} &: p\equiv 7\pmod{12},\\ \
        \displaystyle\frac{p+1}{12} &: p\equiv 11\pmod{12}.
    \end{cases}\] It can easily be derived from the genus formula for $X_0(N)$, see \cite[Theorem 3.1.1, Exercise 3.1.4 (e)]{modular}. From this formula it follows that $g_0(p)\geq\frac{p-13}{12}$. Therefore, for $p>43644$ we have that $\frac{g_0(p)}{3}+3<\frac{g_0(p)}{2}+\frac{1}{2}-\frac{3\sqrt{p}}{4\pi}(\log p+5-2\log 6)$, meaning that \[\frac{g_0(p)}{3}+3<g_0^+(p)\leq\frac{g_0(p)}{2}.\]

    For all primes $\computationbound<p<43644$ we computed $g_0^+(p)$ manually using Sage and obtained the same inequality $\frac{g_0(p)}{3}+3<g_0^+(p)\leq\frac{g_0(p)}{2}$.

    Since $\dim A\leq 2$, this implies that $\frac{g_0(p)}{3}+1<\dim A^+\leq\frac{g_0(p)}{2}$ and $\dim A^-\geq\frac{g_0(p)}{2}-2$. Suppose now that we have a rational morphism $f:X_0(p)\to Y$ of degree $d>1$. We divide the proof into several cases.

    {\bf Case 1:} $A^+\subseteq f^*(J(Y))$. From the proof of \Cref{cor:not_curve_criterion} we conclude that $g(Y)\geq\dim A^+>\frac{g_0(N)}{3}+1$. The Riemann-Hurwitz formula gives us \[2g_0(N)-2\geq\deg f\cdot(2g(Y)-2)>\deg f\cdot\frac{2g_0(N)}{3}\] which forces $\deg f=2$. We now use the same argument as in Case 2 of the proof of \Cref{thm_p<3000} to show that $f$ is isomorphic to the quotient map $X_0(p)\to X_0^+(p)$.

    {\bf Case 2:} $A^-\subseteq f^*(J(Y))$. From the proof of \Cref{cor:not_curve_criterion} we conclude that $g(Y)\geq\dim A^->\frac{g_0(N)}{2}-2$. The Riemann-Hurwitz formula gives us \[2g_0(N)-2\geq\deg f\cdot(2g(Y)-2)>\deg f\cdot(g_0(N)-6)\] which forces $\deg f=2$. Now $f$ must be isomorphic to the quotient map $X_0(p)\to X_0^+(p)$, but this is impossible because then $A^-\nsubseteq f^*(J(Y))$.

    {\bf Case 3:} $f^*(J(Y))\subseteq A$. We conclude that $g(Y)\leq \dim A\leq 2$. Since $g(Y)\geq2$ by the assumption of the theorem, we conclude that $g(Y)=\dim A=2$ and $Y$ must be a hyperelliptic curve. Similarly as in Case 4 of the proof of \Cref{thm_p<3000}, we can now check in \cite[Tables 1,2]{GonzalezJimenez2002} and \cite[Theorem 1.3, Table 4]{BakerGonzalezPoonen2005} that there are no non-constant morphisms from $X_0(p)$ to a genus $2$ curve when $p$ is prime.
\end{proof}


\section{Modular curves $X_0(p)$ with infinitely many degree $6$ points}\label{inf_deg6points_section}

In this section we prove that all curves $X_0(p)$ listed in \Cref{deg6_thm} have infinitely many degree $6$ points over $\Q$. The sources of these infinitely many degree $6$ points will be the pullbacks of infinitely many lower degree points by a rational morphism.

\begin{proposition}\label{deg6map}
    The modular curve $X_0(p)$ has infinitely many degree $6$ points over $\Q$ for $p\leq 61$ and \[p\in\{71,79,83,89,97,101,113,127,131,137,139,149,151,179,181,227,239\}.\]
\end{proposition}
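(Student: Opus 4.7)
The list of primes in the proposition splits into two natural groups and I would treat them separately.

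The first group consists of the primes $p \leq 61$ together with $p \in \{71, 79, 83, 89, 101, 131\}$. These are exactly the prime levels appearing in Bars's list in \Cref{thmquadratic}, so for each such $p$ the curve $X_0(p)$ already admits infinitely many closed points of degree $2$ over $\Q$. A degree-$2$ point is automatically of degree $\leq 3$, so this group needs no further argument.

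For the remaining primes, namely $p \in \{97, 113, 127, 137, 139, 149, 151, 179, 181, 227, 239\}$, the plan is to exhibit, for each such $p$, a non-constant $\Q$-rational morphism $f \colon X_0(p) \to Y$ of degree at most $3$ where $Y$ is a curve having infinitely many $\Q$-rational points: either $Y \cong \PP^1$, or $Y$ is an elliptic curve of positive $\Q$-rank. Once such an $f$ is produced, the pullback $f^{*}(Y(\Q))$ is an infinite collection of $\Q$-rational effective divisors of degree $\leq 3$ on $X_0(p)$, which supplies the required supply of low-degree points.

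The concrete construction of such an $f$ for each prime in the second group is guided by the decomposition of $J_0(p)$ into simple factors, available from \cite{lmfdb}: I would identify a positive-rank elliptic factor $E \subseteq J_0(p)$ (or more generally a positive-rank abelian factor admitting a small-degree elliptic quotient) and use the associated modular parametrization $X_0(p) \to E$, checking that its modular degree is at most $3$. When the direct modular parametrization has degree larger than $3$, one looks for a composite map factoring through $X_0^+(p)$, or other maps arising from the Atkin--Lehner structure. These verifications are finalized by explicit Magma/SageMath computations, using the accompanying code base referenced in the introduction.

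The main obstacle is the case-by-case existence check in the second group: for each of the eleven remaining primes one must confirm the existence of a degree-$\leq 3$ rational morphism to some curve with infinitely many rational points. This is expected to be the most delicate part, particularly for the larger primes (e.g.\ $p = 227, 239$), where $J_0(p)$ has many simple factors and the relevant modular parametrization may need to be constructed via a composition rather than found directly among the optimal quotients of $J_0(p)$.
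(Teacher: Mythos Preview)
There is a genuine problem, but it originates in the proposition itself rather than in your strategy: the statement as printed contains a typo. The label \texttt{deg6map}, the proof given in the paper, and the way the proposition is invoked in the proof of \Cref{deg6_thm} all make clear that the intended conclusion is that $X_0(p)$ has infinitely many points of degree $6$ (or $\leq 6$), not degree $\leq 3$. Read literally, the proposition is false for every prime in your second group: by \Cref{thmquadratic} and \Cref{thmcubic}, the curves $X_0(97)$, $X_0(113)$, $X_0(127)$, \dots, $X_0(239)$ have only finitely many points of degree $\leq 3$ over $\Q$. Consequently no degree-$\leq 3$ map from these $X_0(p)$ to $\PP^1$ or to a positive-rank elliptic curve exists, and the search you outline for the eleven remaining primes cannot succeed.

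The paper's actual argument is quite different from what you sketch. It shows that each listed $X_0(p)$ admits a degree $6$ rational map to $\PP^1$: for the primes of $\Q$-gonality $\leq 3$ one post-composes a gonal map with $x \mapsto x^{6/\gon}$; for $p\in\{53,61,79,83,89,101,131\}$ one uses that $X_0^+(p)$ is an elliptic curve and composes the quotient map with a degree-$3$ map $E \to \PP^1$; and for $p\in\{97,113,127,137,139,149,151,179,181,227,239\}$ one quotes \cite{NajmanOrlic22} that $\gon_\Q X_0(p)=6$. Hilbert's irreducibility theorem then produces infinitely many degree-$6$ points. No elliptic-curve targets or modular parametrizations enter at all; everything goes through $\PP^1$.
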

\begin{proof}
    First, let us prove that all these curves $X_0(p)$ admit a degree $6$ rational morphism to $\PP^1$. For $p\leq47$ and $p\in\{59,71\}$ the curve $X_0(p)$ has $\Q$-gonality $\leq3$ by \cite{Ogg74, HasegawaShimura1999}. The morphism $a^d: \PP^1\to \PP^1, (x:y)\mapsto(x^d:y^d)$ is defined over $\Q$ and is of degree $d$. Let $f: X_0(p)_\Q \to \P^1$ be a gonal map and $d:=\displaystyle\frac{6}{\textup{gon}_\Q(X_0(p))}$, then the composition $X_0(p)\xrightarrow{f}\PP^1\xrightarrow{a^d}\PP^1$ is of degree $6$ and defined over $\Q$.
    
    For \[p\in\{53,61,79,83,89,101,131\}.\] the quotient curve $X_0^+(p)$ is an elliptic curve. Any elliptic curve $E: y^2+a_1xy+a_3y=x^3+a_2x^2+a_4x+a_6$ admits a degree $3$ rational morphism $y: E\to\PP^1$. Therefore, the composition map $X_0(p)\to X_0^+(p)\to\PP^1$ is of degree $6$ and defined over $\Q$.

    For \[p\in\{97,113,127,137,139,149,151,179,181,227,239\}.\] the curve $X_0(p)$ has $\Q$-gonality equal to $6$ by \cite[Theorem 1.4]{NajmanOrlic22}.

    The existence of this degree $6$ rational morphism $X_0(p)\to\PP^1$ together with Hilbert's Irreducibility Theorem \cite[Proposition 3.3.5(2) and Theorem 3.4.1]{serre2016topics} implies the existence of infinitely many degree $6$ points on $X_0(p)$.
\end{proof}

\begin{proposition}\label{bars_cubic_points}
    The modular curve $X_0(p)$ has infinitely many degree $6$ points over $\Q$ for \[p\in\{67,73,103,107,109, 163,167,191,269\}.\]
\end{proposition}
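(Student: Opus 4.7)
The plan is, for each prime $p$ in the list, to produce infinitely many $\Q$-rational cubic points on the quotient $X_0^+(p) = X_0(p)/w_p$ and pull them back via the degree-two quotient map $\pi\colon X_0(p) \to X_0^+(p)$. A cubic point $Q$ on $X_0^+(p)$ pulls back to a $\Q$-rational effective degree six divisor $\pi^*Q$ on $X_0(p)$, which is either irreducible (a single degree 6 point) or a sum of two $\Gal(\overline{\Q}/\Q)$-conjugate cubic points on $X_0(p)$; the latter happens precisely when a cubic point of $X_0(p)$ maps to $Q$ under $\pi$. By \Cref{thmcubic}, $X_0(p)$ has only finitely many cubic points over $\Q$ for every $p$ in our list, so splitting happens for only finitely many $Q$. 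Hence the task reduces to exhibiting infinitely many $\Q$-rational cubic points on $X_0^+(p)$.

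I would split this by genus. For $p\in\{67,73,103,107\}$ the quotient $X_0^+(p)$ is a genus $2$ curve, and I would verify, via a modular symbol computation or from the LMFDB analytic rank data together with Gross--Zagier--Kolyvagin, that $J_0(p)^+(\Q)$ has positive Mordell--Weil rank. Combined with a $\Q$-rational basepoint (take the image of a cusp of $X_0(p)$), this produces infinitely many cubic points by the standard Abel--Jacobi argument: the map $\Sym^3 X_0^+(p) \to J_0(p)^+$ is surjective with $\PP^1$ generic fibre, so each of the infinitely many $\Q$-rational points of $J_0(p)^+$ gives a $\PP^1$ over $\Q$ of $\Q$-rational degree $3$ effective divisors, and within such a fibre the divisors that are not cubic points (they would have to contain a $\Q$-rational point of $X_0^+(p)$) form a finite subset of cardinality at most $\#X_0^+(p)(\Q)$ by Riemann--Roch.

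For $p\in\{109,163,167,191,269\}$ the quotient $X_0^+(p)$ has genus at least $3$ and no uniform argument is available. For each such prime I would exhibit an explicit $\Q$-rational morphism of degree $3$ from $X_0^+(p)$ to either $\PP^1$ (a trigonal pencil) or to a positive-rank elliptic factor of $J_0(p)^+$, and then apply Hilbert's irreducibility or pull back the positive-rank group of rational points to produce infinitely many cubic points on $X_0^+(p)$. Candidate trigonal pencils can be searched for on a canonical model by enumerating degree $3$ linear systems, and candidate elliptic targets come from the positive-rank one-dimensional simple factors of $J_0(p)^+$ as listed on the LMFDB (for example, the modular parametrisations of the elliptic curves of conductor $p$ on which $w_p$ acts as $+1$, whenever they are of positive rank).

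The main obstacle is precisely this case-by-case construction in the higher-genus cases: neither the trigonality of $X_0^+(p)$ nor the existence of a degree $3$ map to a positive-rank elliptic quotient is guaranteed \emph{a priori}, and exhibiting such a morphism on an explicit model for each of the five remaining primes is the most delicate part of the proof.
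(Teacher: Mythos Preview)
Your overall strategy---pull back infinitely many cubic points on $X_0^+(p)$ through the degree $2$ quotient map and invoke \Cref{thmcubic} to rule out splitting---is exactly the paper's approach. The paper, however, does not reprove the existence of infinitely many cubic points on $X_0^+(p)$: it simply cites \cite[Theorem 1]{BarsDalal22}, which already establishes this for every $p$ in the list. Your case-by-case plan for the higher-genus quotients ($p\in\{109,163,167,191,269\}$) is therefore redundant, and since you do not actually carry out the construction of the trigonal or degree $3$ elliptic maps, your write-up is a sketch rather than a proof. For the genus $2$ quotients you are also working harder than necessary: a genus $2$ curve with a rational point $P_0$ is automatically trigonal via $|K+P_0|$, so Hilbert irreducibility gives infinitely many cubic points without any rank hypothesis.

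One small inaccuracy: when $\pi^*Q$ is reducible it is not ``a sum of two $\Gal(\overline{\Q}/\Q)$-conjugate cubic points''. The two cubic points are interchanged by $w_p$, not by Galois; each is already a closed point of $X_0(p)$ over $\Q$. (There is also the ramified case $\pi^*Q=2P$ with $P$ cubic.) This does not affect the argument---in every reducible case a cubic point of $X_0(p)$ appears, and finiteness of those is all you use---but the description should be corrected.
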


\begin{proof}
 The quotient curve $X_0^+(p)$ has infinitely many cubic points for all these levels $p$ by \cite[Theorem 1]{BarsDalal22}. The inverse images of these cubic points under the degree $2$ quotient map $X_0(p) \to X_0^+(p)$ are points of degree $3$ or degree $6$ over $\Q$.  
 Since these curves $X_0(p)$ have only finitely many points of degree $3$ by \Cref{thmcubic}, we conclude that $X_0(p)$ must have infinitely many degree $6$ points.
\end{proof}

\section{Modular curves $X_0(p)$ with finitely many degree $6$ points}\label{finite_deg6points_section}

In this section we use the results from \Cref{morphism_section} to prove that all curves $X_0(p)$ for prime levels $p$ not listed in \Cref{deg6_thm} have only finitely many degree $6$ points over $\Q$. We use several methods to do that. The first step is to give an upper bound on $p$ using \Cref{frey}.

\begin{lemma}[Ogg, \cite{Ogg74}]\label{Ogg} For a prime $q$ with $q\nmid N$, let $\#X_0(N)(\F_{q^2})$ denote the number of $\F_{q^2}$-rational points on $X_0(N)$.
Then $$\#X_0(N)(\F_{q^2})\geq L_q(N):=\frac{q-1}{12} \psi(N)+2^{\omega(N)},$$
where $\psi(N)=N\displaystyle\prod_{r|N\textup{ prime}}\left(1+\frac{1}{r}\right)$ and $\omega(N)$ is the number of distinct prime factors of $N$.
\end{lemma}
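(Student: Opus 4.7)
The plan is to exhibit two disjoint families of $\F_{q^2}$-rational points on $X_0(N)$ whose sizes add up to the claimed bound $L_q(N)$: the $\Q$-rational cusps and the supersingular moduli points on the good reduction at $q$.

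First I would account for the cusp contribution. The cusps of $X_0(N)$ can be indexed by pairs $(d,a)$ with $d\mid N$ and $a$ running over a set of size $\varphi(\gcd(d,N/d))$, and those corresponding to divisors $d$ satisfying $\gcd(d,N/d)=1$ are individually defined over $\Q$ (the Galois action factors through $(\Z/\gcd(d,N/d)\Z)^{\times}$, which is trivial here). Since each prime factor of $N$ can be assigned to either $d$ or $N/d$ but not to both, there are precisely $2^{\omega(N)}$ such divisors. This produces $2^{\omega(N)}$ cusps rational over $\Q$, and hence over $\F_{q^2}$ after reduction.

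Next I would count the supersingular points on the reduction. The hypothesis $q\nmid N$ ensures that $X_0(N)$ has good reduction at $q$, so $\F_{q^2}$-points of the reduction are well defined. A classical fact is that every supersingular $j$-invariant lies in $\F_{q^2}$; combined with the observation that $\mathrm{Frob}_q^2$ on a supersingular elliptic curve is multiplication by $\pm q$ (which preserves any cyclic subgroup of order coprime to $q$), this implies that every supersingular moduli pair $(E,C)$ on $X_0(N)_{\F_{q^2}}$ defines an $\F_{q^2}$-rational point. A short orbit-counting argument starting from the Eichler mass formula $\sum_{E\text{ s.s.}}1/|\Aut(E)|=(q-1)/24$ gives
\[
\sum_{(E,C)/\cong,\ E\text{ s.s.}}\frac{1}{|\Aut(E,C)|}=\frac{(q-1)\psi(N)}{24},
\]
where the sum is over isomorphism classes of pairs with $E$ supersingular and $C$ cyclic of order $N$. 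Since $[-1]\in\Aut(E)$ fixes every cyclic subgroup setwise, $|\Aut(E,C)|\geq 2$ always, so the unweighted count of supersingular points on $X_0(N)_{\F_{q^2}}$ is at least $(q-1)\psi(N)/12$.

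Finally, the two families are disjoint: cusps of $X_0(N)_{\Z_q}$ specialize to cusps, while supersingular moduli points are non-cuspidal. Summing yields the claimed lower bound. The main technical step will be deriving the weighted mass formula for $X_0(N)$ from the one for $X_0(1)$ together with the uniform lower bound $|\Aut(E,C)|\geq 2$; the cusp count is then a purely combinatorial matter about divisors of $N$.
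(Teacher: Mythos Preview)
Your argument is correct and is the standard one: the paper itself does not supply a proof of this lemma but merely attributes it to Ogg and directs the reader to \cite[Lemma~3.20]{BakerGonzalezPoonen2005} for the details. The approach you outline---counting the $2^{\omega(N)}$ $\Q$-rational cusps together with the supersingular locus via the Eichler mass formula and the bound $|\Aut(E,C)|\geq 2$---is exactly the argument given in that reference.
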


We note that \Cref{Ogg} is not explicitly proven in \cite{Ogg74}, and one can refer to \cite[Lemma 3.20]{BakerGonzalezPoonen2005} for the explicit proof.

\begin{corollary}\label{ogg_cor}
Let $p$ be a prime, then the modular curve $X_0(p)$ has only finitely many degree $d$ points over $\Q$ for $p > 120d-24$.
\end{corollary}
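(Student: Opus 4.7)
The plan is to combine Frey's theorem (\Cref{frey}) with Ogg's lower bound on $\#X_0(p)(\F_{q^2})$ from \Cref{Ogg}, using the standard observation that a non-constant morphism $X_0(p)\to\PP^1$ of degree $e$ defined over $\Q$ reduces modulo any prime $q$ of good reduction (any $q\neq p$) to a morphism of $\F_{q^2}$-schemes of the same degree, and hence that $\#X_0(p)(\F_{q^2})\leq e(q^2+1)$.

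I would argue by contrapositive. Suppose $X_0(p)$ has infinitely many degree $d$ points over $\Q$. Then \Cref{frey} gives $\textup{gon}_\Q X_0(p)\leq 2d$, so there is a rational morphism $X_0(p)\to\PP^1$ of some degree $e\leq 2d$. The natural choice of auxiliary prime is the smallest one different from $p$, namely $q=2$, which is available since the hypothesis $p>120d-24\geq 96$ (for $d\geq 1$) rules out $p=2$. Reducing the map modulo $2$ and counting $\F_4$-points then gives
\[
\#X_0(p)(\F_4)\leq e(q^2+1)\leq 2d\cdot 5=10d.
\]

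On the other hand, plugging $N=p$ (so $\psi(p)=p+1$ and $\omega(p)=1$) and $q=2$ into \Cref{Ogg} yields
\[
\#X_0(p)(\F_4)\geq \frac{p+1}{12}+2.
\]
Combining the two inequalities gives $\frac{p+1}{12}+2\leq 10d$, equivalently $p\leq 120d-25$, which contradicts the hypothesis $p>120d-24$.

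No step here presents a real obstacle; the only minor verification is that the $\Q$-gonal morphism extends to a non-constant morphism of the same degree on the smooth model of $X_0(p)$ over $\Z_2$ and has the same degree on the special fibre over $\F_2$, which is standard at primes of good reduction since $\PP^1$ is proper and $X_0(p)_{\Z_2}$ is smooth.
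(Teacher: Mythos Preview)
Your proof is correct and follows essentially the same approach as the paper's own proof: both argue by contrapositive, apply \Cref{frey} to get $\textup{gon}_\Q X_0(p)\leq 2d$, reduce the gonal map modulo $q=2$, bound $\#X_0(p)(\F_4)\leq 10d$ via the fibres over $\PP^1(\F_4)$, and contrast this with the lower bound $\frac{p+1}{12}+2$ from \Cref{Ogg}. The only cosmetic difference is that the paper phrases the upper bound as ``at most $2d$ points map to each of the $5$ points of $\PP^1(\F_4)$'' rather than writing $e(q^2+1)$.
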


\begin{proof}
    Suppose that $X_0(p)$ has infinitely many degree $d$ points over $\Q$. \Cref{frey} tells us that in that case the $\Q$-gonality of $X_0(p)$ must be at most $2d$. Let us take the rational morphism $f:X_0(p)\to\PP^1$ of degree $\leq 2d$.

    Since $p>2$, the curve $X_0(p)$ has good reduction at $q=2$ and hence $f$ induces a map $f_{\F_2} : X_0(p)_{\F_2}\to  \PP^1$ of degree $\leq 2d$ (cf. \cite[Proof of Theorem 3.2]{NguyenSaito}). Now we know that there are at most $10d$ $\F_4$-rational points on $X_0(p)_{\F_2}$ since at most $2d$ points can map to each of the $5$ $\F_4$-rational points on $\PP^1$. \Cref{Ogg} now gives us the inequality \[\frac{p+1}{12}+2\leq\#X_0(p)(\F_{4})\leq10d,\] implying that $p < 120d-24$.
\end{proof}

This leaves us with reasonably many levels $p$ we need to check. First, we will deal with the curves of genus $\geq 17$ using the results of \cite{KadetsVogt}. The main topic of that article is the minimum density degree $\delta(C/K)$ and we will use several results from there in our study of degree $6$ points on $X_0(p)$.

\begin{definition}\cite[Page 1]{KadetsVogt}
    Let $C$ be a curve defined over a number field $K$. The {\it density degree set} $\delta(C/K)$ is the set of integers $d$ for which the collection of closed points of degree $d$ on $C$ is infinite. The {\it minimum density degree}\footnote{In previous versions of \cite{KadetsVogt}, the minimum density degree was called \textit{the arithmetic degree of irrationality} $\textup{a.irr}_K C$.} $\min(\delta(C/K))$ is the smallest integer $d\geq1$ such that $C$ has infinitely many closed points of degree $d$ over $K$, or equivalently:
    $$\min(\delta(C/K)) =\min\left\{d 
\in \N \mid \#\left\{\cup_{[F:K]=d} C(F)\right\}=\infty\right\}.$$
    We also define the {\it potential density degree set} $\wp(C/K):=\bigcup_{[L:K]<\infty}\delta(C/L)$, and analogously the {\it minimum potential density degree}
    $$\min(\wp(C/K)) :=\min \bigcup_{[L:K]<\infty}\delta(C/L).$$
\end{definition}

\begin{theorem}[{\cite[Theorem 1.3]{KadetsVogt}}]\label{KVthm}
Suppose that $C$ is a nice curve of genus $g$ over a number field $K$ and $\min(\delta (C/K))=d.$ Let $m:=\lceil{d/2}\rceil -1$ and let $\epsilon := 3d-1-6m < 6.$ Then one of the following holds:
\vskip 0.1in
(1) There exists a $K$-rational non-constant morphism of curves $\phi \colon C \rightarrow Y$ of degree at least $2$ such that $d = \min(\delta(Y/K)) \cdot \deg \phi.$
\vskip 0.1in
(2) $g \leq \max \left(\frac{d(d-1)}{2} + 1, 3m (m-1)+m \epsilon \right).$
\end{theorem}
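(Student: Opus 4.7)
The plan is to invoke Faltings' theorem on rational points of subvarieties of abelian varieties (Mordell--Lang). If $d = \min(\delta(C/K))$, then infinitely many closed points of $C$ of degree exactly $d$ yield infinitely many $K$-rational points on the symmetric product $C^{(d)}$ corresponding to single irreducible closed points. Composition with the Abel--Jacobi map $\mathrm{AJ} \colon C^{(d)} \to \Pic^d(C) \cong J(C)$ produces infinitely many $K$-rational points in the image $W_d$. Faltings' theorem then implies that their Zariski closure is a finite union of translates of abelian subvarieties; the minimality of $d$ forces at least one such translate $P + A$ to be positive-dimensional, with $A \subseteq J(C)$ defined over $K$.

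I would then split into two cases based on the dimension of the generic fiber of $\mathrm{AJ}$ over $P + A$. If this dimension is positive, then one extracts a $K$-rational linear series $g^{r}_{d}$ on $C$ with $r \geq 1$, and the associated morphism $C \to \PP^r$ of degree dividing $d$, after Stein factorization and an arithmetic refinement via Hilbert irreducibility, produces a morphism $\phi \colon C \to Y$ of degree $\geq 2$ with $\min(\delta(Y/K)) \cdot \deg \phi = d$, yielding conclusion (1). If instead the generic fiber is zero-dimensional, then $P + A$ embeds into $W_d$ with $\mathrm{AJ}$ generically finite over it; this is the setting of Abramovich--Harris and Debarre--Fahlaoui, where the presence of a positive-dimensional abelian-subvariety translate inside $W_d$ imposes rigid constraints on how $C$ embeds in $J(C)$, such as the existence of a non-trivial correspondence with large trace pulled back from multiplication by integers on $A$.

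The main obstacle is the second case: one must translate the bare existence of an abelian-variety translate inside $W_d$ into a concrete bound on $g$ without any explicit linear system in hand. This requires the full Debarre--Fahlaoui machinery classifying the possible structures of the pair $(C, A)$ in terms of elliptic subcovers of $C$, and an optimization of the resulting genus bound across all admissible values of $\dim A$. The two terms in the final bound $\max\bigl(d(d-1)/2+1,\, 3m(m-1)+m\epsilon\bigr)$ correspond respectively to the Castelnuovo--Severi bound when $A$ forces $C$ to admit a $g^{1}_{d}$, and to a careful Riemann--Hurwitz dimension count for covers of elliptic curves obtained from the simple isogeny factors of $A$, with the numerology $m = \lceil d/2\rceil - 1$ and $\epsilon = 3d - 1 - 6m$ reflecting the optimal packing of elliptic contributions consistent with $\dim A \leq \dim W_d = d$.
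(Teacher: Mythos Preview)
The paper does not prove this theorem at all: it is quoted verbatim from \cite[Theorem 1.3]{KadetsVogt} and used as a black box, so there is no proof in the paper to compare your attempt against.

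As a sketch of the actual Kadets--Vogt argument, your outline is broadly on target in its opening moves (Faltings/Mordell--Lang on $W_d$, extraction of a positive-dimensional translate $P+A$, the dichotomy on the generic fibre dimension of the Abel--Jacobi map). However, your account of where the two terms in the genus bound come from is not accurate. The term $d(d-1)/2+1$ is not a Castelnuovo--Severi bound arising from a forced $g^1_d$; if $C$ admitted a degree-$d$ map to $\PP^1$ that would already land you in conclusion (1), not (2). And the term $3m(m-1)+m\epsilon$ does not come from a ``Riemann--Hurwitz count for covers of elliptic curves obtained from the simple isogeny factors of $A$''; the Debarre--Fahlaoui curves are specific curves lying on the symmetric square of a single elliptic curve, and the bound arises from a delicate analysis (in \cite[\S5--6]{KadetsVogt}) of how large $g$ can be when $C$ fails to be Debarre--Fahlaoui yet $W_d$ still contains such a translate. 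Your sketch correctly identifies this second case as the hard one but does not supply the mechanism that produces the stated numerology.
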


Note that the above theorem can be used to get lower bounds on the minimum density degree using the following corollary. This corollary can be easily proved by applying \Cref{KVthm} for all degrees $\leq d$.

\begin{corollary}\label{KVthmv2}
Suppose that $C$ is a nice curve of genus $g$ over a number field $K$. Let $d$ be an integer, $m:=\lceil{d/2}\rceil -1$ and let $\epsilon := 3d-1-6m < 6.$ Assume that:
\begin{enumerate}
    \item 
 For all $K$-rational non-constant morphisms of curves $\phi \colon C \rightarrow Y$ of degree at least $2$ one has $ \min(\delta(Y/K)) \cdot \deg \phi> d,$ and
\item $g > \max \left(\frac{d(d-1)}{2} + 1, 3m (m-1)+m \epsilon \right).$
\end{enumerate}
Then $\min(\delta (C/K))>d.$
\end{corollary}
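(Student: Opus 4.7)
The plan is to argue by contradiction. Suppose $\min(\delta(C/K)) = d' \leq d$, and apply \Cref{KVthm} with this value $d'$ in place of $d$. Setting $m' := \lceil d'/2 \rceil - 1$ and $\epsilon' := 3d' - 1 - 6m'$, the conclusion of \Cref{KVthm} forces one of two alternatives: either (1$'$) there is a $K$-rational non-constant morphism $\phi : C \to Y$ with $\deg \phi \geq 2$ and $d' = \min(\delta(Y/K))\cdot \deg \phi$, or (2$'$) $g \leq \max\!\left(\tfrac{d'(d'-1)}{2} + 1,\; 3m'(m'-1) + m'\epsilon'\right)$. The goal is to show both options contradict the hypotheses of the corollary.

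Alternative (1$'$) is immediate: it gives $\min(\delta(Y/K))\cdot \deg\phi = d' \leq d$, directly contradicting assumption~(1) of the corollary, which asserts that every such product strictly exceeds $d$. So the remaining work concentrates on alternative (2$'$), where one needs to see that the right-hand side of the bound on $g$ is monotonically non-decreasing in the degree parameter. Since $d' \leq d$, monotonicity will yield $g \leq \max\!\left(\tfrac{d(d-1)}{2} + 1, 3m(m-1) + m\epsilon\right)$, which contradicts assumption~(2).

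The first summand $\tfrac{d'(d'-1)}{2} + 1$ is transparently non-decreasing for $d' \geq 1$. For the second summand $3m'(m'-1) + m'\epsilon'$, the cleanest check is by parity: $\epsilon' = 2$ when $d'$ is odd and $\epsilon' = 5$ when $d'$ is even, and $m'$ increments precisely when $d'$ transitions from even to odd. A short case analysis shows that stepping $d' \to d'+1$ increases $3m'(m'-1)+m'\epsilon'$ by either $3m'$ (even $d'$ step) or $3m'+2$ (odd $d'$ step), both non-negative. With monotonicity established, the contradiction is complete and $\min(\delta(C/K)) > d$ follows.

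The main ``obstacle'' is truly only this monotonicity verification, which is routine arithmetic rather than genuine mathematical difficulty; the conceptual content all lies in \Cref{KVthm} itself. I would simply record the parity case analysis as a one-line remark, noting that both expressions in the maximum are non-decreasing, and invoke \Cref{KVthm} uniformly over $d' \in \{1, \dots, d\}$ rather than writing out the contradiction for each value separately.
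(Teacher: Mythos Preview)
Your proof is correct and follows exactly the approach the paper indicates (the paper simply says the corollary follows ``by applying \Cref{KVthm} for all degrees $\leq d$''); you have merely spelled out the monotonicity check that makes this work. One tiny slip: the increments you name are swapped---the step from even $d'$ gives $3m'+2$ and from odd $d'$ gives $3m'$---but since both are non-negative the argument is unaffected.
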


The above result shows that having good knowledge about rational morphisms $\phi : X_0(p) \to Y$ allows one to deduce a lower bound on $\min(\delta (C/K))$. 
The following theorem shows that if $d \leq 25$, then the degree $2$ map $X_0(p) \to X_0^+(p)$ is the only morphism with $g(Y) \geq 2$ for which condition $(1)$ of \Cref{KVthmv2} could potentially fail. It reduces the verification of the $g(Y) \geq 2$ part of condition $(1)$ to proving a lower bound of size $d/2$ on $\min(\delta(X_0^+(p)/\Q))$.

\begin{theorem}\label{thm:maps_for_small_degree_points}
    Let $d\leq 25$ be an integer and $p$ be a prime. Suppose that there exists a non-constant rational morphism of nice curves $\phi \colon X_0(p) \rightarrow Y$ of degree at least $2$ such that
    \begin{enumerate}
        \item $Y$ has genus $\geq 2$ and
        \item $Y$ has infinitely many points of degree $\leq d/\deg \phi$.
    \end{enumerate}
    Then, up to isomorphism, one has that $Y=X_0^+(p)$ and $\phi:X_0(p) \to X_0^+(p)$ is the quotient map.
\end{theorem}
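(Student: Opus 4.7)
The plan is to reduce to the case $p<\computationbound$ so that \Cref{thm_p<3000} immediately supplies the conclusion. The bridge between the hypotheses and this reduction is the Ogg point-counting bound packaged in \Cref{ogg_cor}, which converts infinitely many low-degree points on $X_0(p)$ into an upper bound on $p$.

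First I would pull back the low-degree points of $Y$ to $X_0(p)$ along $\phi$. For each closed point $y \in Y$ of degree $e \leq d/\deg\phi$ over $\Q$, the fibre $\phi^{-1}(y)$ is nonempty and its closed points have degrees summing to $e \cdot \deg\phi$; in particular this fibre contains at least one closed point of $X_0(p)$ of degree at most $e \cdot \deg\phi \leq d$. Since distinct closed points of $Y$ have disjoint fibres, hypothesis $(2)$ produces infinitely many closed points on $X_0(p)$ of degree at most $d$ over $\Q$. Applying Frey's theorem (\Cref{frey}) to this collection yields $\gon_\Q X_0(p) \leq 2d \leq 50$, and then the argument used to prove \Cref{ogg_cor}---reduce a gonal map of degree $\leq 2d$ modulo the prime of good reduction $q=2$ and compare with the lower bound of \Cref{Ogg}---gives the chain $\tfrac{p+1}{12}+2 \leq \#X_0(p)(\F_4) \leq 10d$, hence $p < 120d - 24 \leq 2976 < \computationbound$.

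With $p < \computationbound$ established and $g(Y) \geq 2$ by hypothesis, \Cref{thm_p<3000} applies directly and identifies $\phi$ with the quotient map $X_0(p) \to X_0^+(p)$ up to isomorphism. There is really no substantive obstacle here: the classification work was already accomplished in \Cref{thm_p<3000}, and the appearance of the cutoff $d \leq 25$ in the statement is simply the largest value of $d$ for which $120d - 24 < \computationbound$, so that the numerical reach of the Ogg bound matches the numerical reach of \Cref{thm_p<3000}.
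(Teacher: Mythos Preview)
Your proof is correct and follows essentially the same approach as the paper: pull back the infinitely many low-degree points of $Y$ to obtain infinitely many points of degree $\leq d$ on $X_0(p)$, invoke \Cref{ogg_cor} to force $p<\computationbound$, and then apply \Cref{thm_p<3000}. The paper phrases the reduction contrapositively (if $p\geq\computationbound$ then no such $\phi$ can satisfy condition~(2)) and leaves the pullback implicit, whereas you spell it out explicitly, but the content is the same.
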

\begin{proof}
If $p<\computationbound$, then this is a consequence of \Cref{thm_p<3000}. Now suppose that $p\geq\computationbound$. 
Since $(p+24)/120 > d$, one has that $X_0(p)$ has only finitely many points of degree $\leq d$ due to \Cref{ogg_cor}. The result now follows since this implies there are no $\phi$ satisfying assumption (2). 
\end{proof}

Therefore, if one wants to search for infinitely many points of degree $d\leq 25$ on curves $X_0(p)$ as pullbacks of smaller degree points, the only possibilities are morphisms $X_0(p)\to\PP^1$, $X_0(p)\to E$, and the quotient map $X_0(p)\to X_0^+(p)$.

For $\PP^1$ this becomes the question of determining the $\Q$-gonality. For elliptic curves $E$ we know that the degree of a morphism to $E$ must be a multiple of the modular degree of $E$ ($p$ is a prime, so the conductor of $E$ must be equal to $p$). The LMFDB contains all elliptic curves over $\Q$ of conductor at most $500,000$ which is enough for all $d\leq \floor{500024/120} = 4166$.

\begin{remark} For smaller degrees $d$ \Cref{KVthm} can be very useful for showing that $X_0(p)$ only has finitely many points of degree $d$. However, if one wants to study the points of degree $d>25$ on $X_0(p)$, then one cannot get new information from \Cref{KVthm}. Indeed, if $g\leq\frac{d(d-1)}{2}+1$, then we cannot apply the theorem. From the genus formula for $X_0(p)$ we know that $\frac{p-13}{12}\leq g_0(p)\leq\frac{p+1}{12}$. So if $g_0(p)\geq\frac{d(d-1)}{2}+2$, then $p \geq 6d(d-1)+23$. Now, if $d \geq 21$, then $6d(d-1)+23 > 120d-24$ so we already know that $X_0(p)$ has only finitely many degree $d$ points by \Cref{ogg_cor}. Therefore, there is no merit in using \Cref{KVthm} for such $d$.
\end{remark}

When $Y= X_0^+(p)$, we need to bound $\min(\delta(X_0^+(p)/\Q))$ from below. To this end, the following variation of \Cref{thm:maps_for_small_degree_points} verifies a large part of condition (1) of \Cref{KVthmv2}.

\begin{theorem}\label{thm:maps_for_small_degree_points_plus}
    Let $d\leq 12$ be an integer and $p$ be a prime. Then there does not exist any non-constant rational morphism of nice curves $\phi \colon X_0^+(p) \rightarrow Y$ of degree at least $2$ such that
    \begin{enumerate}
        \item $Y$ has genus $\geq 2$ and
        \item $Y$ has infinitely many points of degree $\leq d/\deg \phi$.
    \end{enumerate}
\end{theorem}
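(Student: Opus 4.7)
The plan is to deduce this from \Cref{thm:maps_for_small_degree_points} by composing any hypothetical $\phi$ with the Atkin--Lehner quotient map $\pi : X_0(p) \to X_0(p)^+$, which has degree $2$.

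Suppose for contradiction that $\phi : X_0(p)^+ \to Y$ satisfies the two conditions of the statement. I would form the composition $\psi := \phi \circ \pi : X_0(p) \to Y$, which is a non-constant morphism of degree $\deg \psi = 2 \deg \phi \geq 4$. Its target $Y$ still has genus $\geq 2$, and the infinitely many points of degree $\leq d/\deg\phi = (2d)/\deg\psi$ on $Y$ place $\psi$ inside the scope of \Cref{thm:maps_for_small_degree_points} applied with parameter $2d$ in place of $d$. Since $d \leq 12$ ensures $2d \leq 24 \leq 25$, that theorem applies and forces $Y \cong X_0(p)^+$ with $\psi$ isomorphic, up to automorphism of the target, to the quotient map $\pi$ itself. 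But that would give $\deg \psi = \deg \pi = 2$, contradicting $\deg \psi \geq 4$.

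The argument is essentially mechanical once \Cref{thm:maps_for_small_degree_points} is available; there is no substantial obstacle. The only calibration is the bound $d \leq 12$, which is chosen precisely so that the factor $2$ introduced by composition with $\pi$ still lands inside the range $d' \leq 25$ covered by the earlier theorem. Conceptually, the statement expresses that any non-constant morphism from $X_0(p)^+$ to a curve of genus $\geq 2$ lifts to one from $X_0(p)$ of twice the degree, which is already ruled out by the classification of maps from $X_0(p)$ to higher-genus curves.
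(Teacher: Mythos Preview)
Your argument is correct. By composing a hypothetical $\phi$ with the degree $2$ quotient $\pi : X_0(p) \to X_0(p)^+$ you obtain $\psi = \phi \circ \pi$ of degree $2\deg\phi \geq 4$, and the hypotheses of \Cref{thm:maps_for_small_degree_points} with parameter $2d \leq 24$ are satisfied verbatim since $(2d)/\deg\psi = d/\deg\phi$. The conclusion $\deg\psi = 2$ then yields the contradiction.

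The paper's own proof is organised differently: it re-runs the case split $p < \computationbound$ versus $p \geq \computationbound$, invoking \Cref{thm_p_plus<3000} for small primes and \Cref{ogg_cor} (applied to $X_0(p)$ with parameter $2d$, then descended to $X_0(p)^+$) for large primes. Your composition trick is a cleaner packaging: it hides the case split inside the already-proved \Cref{thm:maps_for_small_degree_points} and avoids re-invoking \Cref{thm_p_plus<3000} and \Cref{ogg_cor} separately. The underlying ingredients are identical, so nothing is gained or lost mathematically; your version is simply shorter and makes the logical dependence on \Cref{thm:maps_for_small_degree_points} explicit.
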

\begin{proof}
If $p<\computationbound$, then this is a consequence of \Cref{thm_p_plus<3000}. Now suppose that $p\geq\computationbound$. 
Since $(p+24)/120 > 2d$, one has that $X_0(p)$ has only finitely many points of degree $\leq 2d$ due to \Cref{ogg_cor} and hence $X_0^+(p)$ has only finitely many points of degree $\leq d$. The result now follows since this implies that there are no morphisms $\phi$ satisfying the assumption (2).
\end{proof}

The following corollary of \Cref{KVthm} gives conditions for general curves $C/\Q$ with infinitely many degree $6$ points.

\begin{corollary}\label{KVcor}
If $C/\Q$ is a nice curve such that $C(\Q)\neq\emptyset$, $\min(\delta(C/\Q))=6$, and $g(C) \geq 17,$ then one of the following holds:
\vskip 0.1in
(1) $C$ admits a $\Q$-rational morphism of degree $6$ to either $\PP^1$ or an elliptic curve $E$ of positive rank over $\Q$.
\vskip 0.1in
(2) $C$ admits a $\Q$-rational morphism of degree $2$ to a genus $4$ curve with infinitely many cubic points over $\Q$.
\end{corollary}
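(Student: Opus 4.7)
The plan is to apply \Cref{KVthm} directly with $d=6$. Setting $m = \lceil 6/2\rceil - 1 = 2$ and $\epsilon = 3\cdot 6 - 1 - 12 = 5$, the genus threshold becomes $\max\left(\frac{d(d-1)}{2}+1,\, 3m(m-1)+m\epsilon\right) = \max(16,16) = 16$. Since $g(C)\geq 17$, case~(2) of \Cref{KVthm} is excluded, so we obtain a $\Q$-rational non-constant morphism $\phi\colon C\to Y$ of degree $\geq 2$ satisfying $6 = \deg\phi\cdot \min(\delta(Y/\Q))$. The only factorizations with $\deg\phi\geq 2$ are $(\deg\phi,\min(\delta(Y/\Q))) \in \{(6,1),\,(3,2),\,(2,3)\}$, and the plan is to show that all but one subcase yield option~(1).

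The $(6,1)$ case gives option~(1) immediately by Faltings' theorem. For the $(3,2)$ case, $C(\Q)\neq\emptyset$ forces $Y(\Q)\neq\emptyset$, so genus $0$ is impossible and genus $1$ gives that $Y$ is a rank~$0$ elliptic curve, with the $x$-coordinate projection $Y\to\PP^1$ providing option~(1) after composition. For $g(Y)\geq 2$, \Cref{harrissilverman} implies $Y$ is hyperelliptic or bielliptic; composing $\phi$ either with the hyperelliptic map $Y\to\PP^1$ or with a bielliptic map $Y\to E$ to a positive rank elliptic curve~$E$ (whose existence is forced by $\min(\delta(Y))=2$ together with \Cref{harrissilverman}, since a rank~$0$ quotient provides only finitely many quadratic points by pullback) yields option~(1).

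The delicate case is $(2,3)$. Since $Y(\Q)\neq\emptyset$, genera $0$ and $1$ give $\min(\delta(Y))\leq 2$, and genus $2$ is hyperelliptic with $\min(\delta(Y))\leq 2$; hence $g(Y)\geq 3$. If $g(Y)=3$, then $Y$ is necessarily non-hyperelliptic, and projection from a $\Q$-rational point on the canonical plane quartic model gives a degree~$3$ map $Y\to\PP^1$, yielding option~(1) after composition. If $g(Y)\geq 5$, then applying \Cref{KVthm} to $Y$ with $d=3$ (whose genus threshold is $\max(4,2)=4$) produces a degree~$3$ map $Y\to Z$ with $\min(\delta(Z))=1$, so $Z$ is $\PP^1$ or a positive rank elliptic curve, and composition again yields option~(1). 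The only remaining possibility is $g(Y)=4$ with $Y$ non-hyperelliptic (the hyperelliptic case would contradict $\min(\delta(Y))=3$), which is precisely option~(2).

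The main obstacle is the $(2,3)$ case with $g(Y)=4$: it is the only subcase that genuinely forces option~(2) rather than option~(1), because genus~$4$ curves are not covered by the rational-point projection argument available in genus~$3$ and fall just outside the reach of a second application of \Cref{KVthm} with $d=3$, whose genus threshold is $4$. The rest of the argument is a careful but routine enumeration of the three Kadets--Vogt factorizations, with Faltings and Harris--Silverman handling the two easier branches.
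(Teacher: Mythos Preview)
Your overall strategy is correct, and in the $(2,3)$ branch you take a different route from the paper: rather than invoking the trichotomy of \cite[Theorem~1.2(2)]{KadetsVogt} (degree~$3$ map / pointless plane quartic / genus~$4$ Debarre--Fahlaoui), you split on $g(Y)$, handle $g(Y)=3$ via projection of the canonical quartic from a rational point, and handle $g(Y)\geq 5$ by re-applying \Cref{KVthm} with $d=3$. This is a perfectly valid alternative and has the virtue of using only \Cref{KVthm} itself rather than the finer classification; the paper's route is shorter because it imports that classification wholesale.

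There is, however, a genuine gap in your $(3,2)$ branch when $g(Y)\geq 2$ and $Y$ is bielliptic but not hyperelliptic. You assert that a positive-rank bielliptic quotient exists ``since a rank~$0$ quotient provides only finitely many quadratic points by pullback''. This reasoning is incomplete: a curve may be bielliptic over several elliptic curves, and \Cref{harrissilverman} as stated only gives the dichotomy hyperelliptic/bielliptic --- it does not say that every quadratic point arises as a pullback along some fixed double cover, nor that some bielliptic quotient has positive rank. The missing input is precisely the structure theorem for degree~$2$ points, which the paper supplies by citing \cite[Theorem~1.2(1)]{KadetsVogt} alongside \Cref{harrissilverman}. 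Once you add that reference (or argue directly via Faltings on $W_2^0 Y$ that the translate of a positive-rank elliptic curve it contains is actually the image of a bielliptic quotient), your argument goes through.
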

\begin{proof}
Since $g(C) \geq 17$, case (2) of \Cref{KVthm} cannot occur. This means that there exists a non-constant morphism of curves $\phi \colon C \rightarrow Y$ of degree at least $2$ over $\mathbb{Q}$ with the property that $6= \min(\delta(Y/\Q)) \cdot \deg \phi.$

If $\deg\phi=6$, then $Y$ has infinitely many rational points and hence it is either $\PP^1$ or an elliptic curve $E$ of positive rank over $\Q$ by Faltings' theorem.

If $\deg\phi=3$, then $Y$ has infinitely many quadratic points. Hence $Y$ admits a degree $2$ rational morphism to $\PP^1$ or to an elliptic curve $E$ of positive rank over $\Q$ by \Cref{harrissilverman} and \cite[Theorem 1.2 (1)]{KadetsVogt}. Therefore, the desired map of degree $6$ to $\P^1$ or $E$ can be obtained as a composition $C\to Y\to(\PP^1 \textup{ or } E)$.

If $\deg\phi=2$, then $\min(\delta(Y/\Q))=3$. \cite[Theorem 1.2 (2)]{KadetsVogt} now tells us that there are $3$ cases regarding the curve $Y$:

\begin{enumerate}
    \item $Y$ admits a $\Q$-rational morphism of degree $3$ to either $\PP^1$ or an elliptic curve $E$ of positive rank over $\Q$.
    \item $Y$ is a smooth plane quartic with no rational points, positive rank Jacobian, and at
least one cubic point.
\item $Y$ is a genus $4$ Debarre–Fahlaoui curve (see the precise definition of Debarre-Fahlaoui curves in \cite[Definition 5.1]{KadetsVogt}).
\end{enumerate}

The second case is impossible since $C(\Q)\neq\emptyset$ implies that $Y(\Q)\neq\emptyset$. In the first case we have a degree $6$ rational composition map $C\to Y\to(\PP^1 \textup{ or } E)$, and in the third case $Y$ is a curve of genus $4$ with infinitely many cubic points.
\end{proof}

\subsection{Proof of \Cref{thm_jacobian_decomposition}}

\begin{proposition}\label{KVhighgenus}
    Let $p > 200$ be a prime. Then the modular curve $X_0(p)$ has infinitely many degree $6$ points over $\Q$ if and only if $p \in \set{227, 239, 269}$.
\end{proposition}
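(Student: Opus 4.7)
The plan is to combine the finite-range reduction of \Cref{ogg_cor} with the structural dichotomy \Cref{KVcor}, and then rule out each alternative using results earlier in the paper. The ``if'' direction is immediate: for $p \in \{227, 239\}$ the $\Q$-gonality of $X_0(p)$ equals $6$ by \Cref{deg6map}, giving infinitely many degree-$6$ points via Hilbert irreducibility, and for $p = 269$ the statement is part of \Cref{bars_cubic_points}.

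For the ``only if'' direction, \Cref{ogg_cor} restricts attention to primes $p \leq 120 \cdot 6 - 24 = 696$, so I would work through the finite set of primes $p \in (200, 696] \setminus \{227, 239, 269\}$. For each such $p$, \Cref{thmquadratic,thmcubic,quarticthm,quinticthm} imply that $X_0(p)$ has only finitely many points of degree $\leq 5$, so the hypothesis that $X_0(p)$ has infinitely many degree-$6$ points forces $\min(\delta(X_0(p)/\Q)) = 6$. Since the cusps of $X_0(p)$ are $\Q$-rational and $g_0(p) \geq 17$ for all primes $p \geq 211$ (a direct check from the genus formula for $X_0(p)$), \Cref{KVcor} applies, yielding either (i) a $\Q$-rational degree-$6$ morphism to $\PP^1$ or to a positive-rank elliptic curve, or (ii) a $\Q$-rational degree-$2$ morphism to a genus-$4$ curve with infinitely many cubic points.

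Each case is then ruled out. For (i) with target $\PP^1$, \cite[Theorem 1.4]{NajmanOrlic22} identifies the primes $p > 200$ with $\gon_\Q X_0(p) \leq 6$ as exactly $\{227, 239\}$. For (i) with an elliptic curve $E/\Q$ as target, $E$ must be an isogeny factor of $J_0(p)$ and hence of conductor $p$ (since all factors of $J_0(p)$ are new when $p$ is prime); by \Cref{modularity_thm}, every such morphism has degree $d^2 m$ where $m$ is the modular degree of $E$, so the condition $d^2 m = 6$ forces $m = 6$. A direct LMFDB query then verifies that no elliptic curve of conductor $p \in (200, 691]$ outside $\{227, 239, 269\}$ has both modular degree $6$ and positive rank. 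For (ii), since $p < \computationbound$, \Cref{thm_p<3000} forces any such degree-$2$ morphism to coincide with the Atkin--Lehner quotient $X_0(p) \to X_0^+(p)$, so the target would be $X_0^+(p)$ of genus $4$ with infinitely many cubic points; but the Bars--Dalal classification cited in \Cref{bars_cubic_points} shows that among primes $p > 200$, only $p = 269$ has $X_0^+(p)$ with infinitely many cubic points, and this value is already excluded.

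The main obstacle is the elliptic-curve subcase of (i). While mechanical, it requires an exhaustive LMFDB query across roughly eighty primes, and care is needed to confirm that for every conductor in the range no rank-$\geq 1$ elliptic curve of modular degree exactly $6$ appears. All other cases reduce immediately to results already catalogued in the literature or proved earlier in the paper.
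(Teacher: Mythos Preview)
Your proof is correct and follows essentially the same approach as the paper's: reduce to $p \leq 696$ via \Cref{ogg_cor}, check $g_0(p) \geq 17$, and rule out each target genus separately using gonality bounds, the LMFDB modular-degree query, and \Cref{thm_p<3000} together with Bars--Dalal. The only differences are cosmetic---the paper invokes \Cref{KVthmv2} rather than its degree-$6$ specialization \Cref{KVcor} (so it checks modular degree $\leq 6$ rather than $=6$), and for the finiteness of cubic points on $X_0^+(p)$ you should cite \cite[Theorems~1 and~3]{BarsDalal22} directly rather than \Cref{bars_cubic_points}, which only records the existence direction.
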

\begin{proof}
Note that for $p>696$ this was already proved by \Cref{ogg_cor}. For $p=227, 239, 269$ the existence of infinitely many degree $6$ points was proved in \Cref{deg6map} and \Cref{bars_cubic_points}. For the remaining primes $p$ between $200$ and $696$ we argue as follows.

The remaining curves $X_0(p)$ have genus $g\geq17$. To prove that, we can use the formula for the genus of $X_0(N)$ \cite[Proposition 1.43]{Shimura:AutomorphicForms} \[g(X_0(N))=1+\frac{\psi(N)}{12}-\frac{\nu_2}{4}-\frac{\nu_3}{3}-\frac{\nu_\infty}{2}.\] Here $\psi(N)=N\displaystyle\prod_{r|N\textup{ prime}}\left(1+\frac{1}{r}\right)$, $\nu_2$ and $\nu_3$ are the numbers of solutions in $\Z/N\Z$ of the equations $x^2+1=0$ and $x^2+x+1=0$, respectively, and $\nu_\infty=\sum_{d\mid N} \varphi(d,N/d)$ is the number of cusps on $X_0(N)$. For $p=211$ we can calculate that $g=17$ manually using this formula and for $p\geq223$ we get the inequality \[g(X_0(p))=1+\frac{p+1}{12}-\frac{\nu_2}{4}-\frac{\nu_3}{3}-1\geq\frac{p+1}{12}-\frac{2}{4}-\frac{2}{3}=\frac{p-13}{12}>17.\]
   We now apply \Cref{KVthmv2} with $d=6$. Condition $(2)$ of that corollary is satisfied since the remaining $X_0(p)$ have genus $\geq 17$. We claim that condition $(1)$ is also satisfied. Indeed, let $\phi : C\to Y$ be a non constant morphism of degree $\geq 2$. 
   
   If $Y$ has genus $0$, then $Y\cong \P^1$ since $X_0(p)$ has rational points corresponding to the cusps. This implies that $\min(\delta(X_0(p)/\Q)) \deg(\phi) \geq \deg(\phi) \geq 7$ since all these curves have gonality $\geq 7$ by \cite[Theorem 1.1]{NajmanOrlic22}.
   
   If $Y$ has genus $1$, then it is an elliptic curve since $X_0(p)$ has a rational point. Condition $(1)$ of \Cref{KVthmv2} is satisfied since the only elliptic curve of prime conductor between $200$ and $696$ and modular degree $\leq 6$ is the elliptic curve \lmfdbec{269}{a}{1} according to the LMFDB, see:
   
   \url{https://www.lmfdb.org/EllipticCurve/Q/?conductor_type=prime&conductor=200-696&sort_order=degree&showcol=degree&sort_order=degree}.  
   
   Finally, if $Y$ has genus $\geq 2$, then the condition $(1)$ of \Cref{KVthmv2} is also satisfied because of \Cref{thm:maps_for_small_degree_points} and the fact that $X_0^+(p)$ has only finitely points of degree $\leq 3$ in the remaining cases due to \cite[Theorems 1 and 3]{BarsDalal22}.

   Therefore, we can apply \Cref{KVthmv2} to conclude $\min(\delta(X_0(p)/\Q)>6$. Hence $X_0(p)$ has only finitely many degree $6$ points in the remaining cases.
\end{proof}

\begin{remark}\label{deg2map_remark}
    We can deal with the case $g(Y)\geq2$ in another way using \Cref{KVcor}. Suppose that we have a degree $2$ rational morphism $f:X_0(p)\to Y$. Then $f$ corresponds to an involution on $X_0(p)$. However, if $N\neq37,63$, then the only non-trivial involutions on curves $X_0(N)$ of genus $g\geq2$ are Atkin-Lehner involutions by \cite[Theorem 0.1]{kenkumomose1988automorphism}. Therefore, up to isomorphism, the only degree $2$ rational morphism from $X_0(p)$ to a curve of genus $g\geq2$ is the quotient map $X_0(p)\to X_0^+(p)$.

    However, this argument with involutions will not work for morphisms of degree $\geq3$. For example, If one wants to determine all curves $X_0(p)$ with infinitely many degree $9$ points, then one of the possibilities arising from \Cref{KVthm} would be the degree $3$ rational morphism from $X_0(p)$ to a curve with infinitely many cubic points.
\end{remark}

With \Cref{KVhighgenus} in hand, we only need to deal with primes smaller than $200$. Note that in \Cref{inf_deg6points_section} we have already proved that most curves $X_0(p)$ with $p<200$ actually have infinitely many degree $6$ points. 
So, in order to prove \Cref{deg6_thm} it remains to show that $X_0(p)$ has only finitely many points for $p \in \set{ 157, 173, 197, 199}$.

Let $C$ be a curve over $\Q$ with at least one rational point, and let  $\Sym^d(C)$ be the $d$-th symmetric power of $C$ on $\Q$. We define \[\phi_d:\Sym^d(C)\to\Pic^d (C)\] as the morphism that sends an effective divisor of degree $d$ to its associated line bundle. Furthermore, we use \begin{align}W_d^0 C := \textup{Im } \phi_d \subset \Pic^d(C)\label{eq:Wd0}\end{align} to denote the scheme theoretic image of $\phi_d$. Note that $W_d^0(C)$ can also be defined as \[W_d^r(C):=\{D\in\Pic^d(C): \ell(D)>r\}.\] These two definitions are equivalent; see \cite[Sections 2 and 4.2]{DHJOdensitydegree} for more details.

One way to characterize the infinitude of degree $d$ points on $C$ is the following theorem.

\begin{theorem}[{\cite[Theorem 4.2. (1)]{belov}}]\label{translate_of_abelian_variety_thm}
    Let $C$ be a curve over a number field $K$. Then $C$ has infinitely many degree $d$ points over $K$ if and only if at least one of the following two statements holds:
    \begin{enumerate}
        \item There exists a map $C\to \mathbb{P}^1$ of degree $d$ over $K$.
        \item There exists a degree $d$ point $x\in C$ and a positive rank abelian subvariety $A\subset \Pic^0 (C)$ such that $\phi_d(x_1+\ldots+x_d)+A\subset W^0_dC$ where $x_1,\ldots,x_d$ are the Galois conjugates of $x$.
    \end{enumerate} 
\end{theorem}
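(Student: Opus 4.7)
The plan is to prove both directions separately, relying on Faltings' theorem on rational points of subvarieties of abelian varieties for the harder only if direction.

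For the if direction, the first case is straightforward: given a degree $d$ map $C\to\PP^1$, Hilbert's Irreducibility Theorem (as invoked in the proof of \Cref{deg6map}) produces infinitely many closed points of degree exactly $d$ on $C$ as fibres over generic $K$-rational points of $\PP^1$. For the second case, the positive rank of $A$ over $K$ supplies infinitely many $K$-rational points of the translate $\phi_d(x_1+\ldots+x_d)+A\subseteq W^0_d(C)$, each represented by an effective $K$-rational divisor of degree $d$ on $C$. I would then argue that infinitely many of these divisors correspond to a single closed point of degree $d$ rather than to a sum of divisors of smaller degree, by a dimension count: the locus of decomposable effective divisors in $\Sym^d(C)$ is a finite union of images of $\Sym^{d_1}(C)\times\Sym^{d_2}(C)$ with $d_1+d_2=d$ and $d_1,d_2>0$, and one has to rule out that the translate of $A$ lies entirely inside any one of these loci (an inductive step on $d$ can reduce such a degenerate situation to one of the two conditions at a smaller level and then recompose).

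For the only if direction, suppose that $C$ has infinitely many closed points $x^{(i)}$ of degree $d$, each contributing an effective $K$-rational divisor $D_i := x^{(i)}_1+\ldots+x^{(i)}_d$ and hence a $K$-rational point $\phi_d(D_i)\in W^0_d(C)\subseteq\Pic^d(C)$. After choosing any $K$-rational point on $\Pic^d(C)$ to identify it with the abelian variety $\Pic^0(C)$, Faltings' theorem on rational points of subvarieties of abelian varieties tells us that the Zariski closure of $\{\phi_d(D_i)\}_i$ inside $W^0_d(C)$ is a finite union of translates of abelian subvarieties of $\Pic^0(C)$. Since this closure contains infinitely many $K$-rational points, at least one component $V=v+A$ must have $\dim A\geq 1$, contain infinitely many of the $\phi_d(D_i)$, and in particular force $A$ to have positive rank over $K$.

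The final step is to analyze the preimage $\phi_d^{-1}(V)\subseteq\Sym^d(C)$, distinguishing two sub-cases according to the fibres of $\phi_d$ over $V$. If some fibre of $\phi_d|_{\phi_d^{-1}(V)}$ has positive dimension, then the corresponding line bundle $L$ satisfies $h^0(L)\geq 2$ and supplies a linear pencil of degree $d$; a base-locus analysis then yields a morphism $C\to\PP^1$ of degree $d$, which is condition (1). Otherwise $\phi_d$ is generically finite on $\phi_d^{-1}(V)$, and for any $D_i$ with $\phi_d(D_i)\in V$ the inclusion $\phi_d(D_i)+A\subseteq V\subseteq W^0_d(C)$ gives condition (2) with $x=x^{(i)}$. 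I expect the main obstacle in the only if direction to be ensuring that the abelian subvariety $A$ produced by Faltings' theorem has positive rank over $K$ itself rather than merely over some finite extension, and, in the positive-dimensional-fibre sub-case, extracting a morphism to $\PP^1$ of degree exactly $d$ (rather than merely some $d'\mid d$) from a $g^1_d$ with a possibly nonempty base locus; in the if direction the analogous difficulty is rigorously showing that the effective divisors parametrized by condition (2) are generically irreducible.
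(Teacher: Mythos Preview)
The paper does not supply a proof of this theorem; it is quoted from \cite[Theorem 4.2.(1)]{belov} and used as a black box in \Cref{AV_cor} and \Cref{prop197}. There is therefore no proof in this paper to compare your proposal against.

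For what it is worth, your outline follows the same strategy as the proof in the cited reference: Faltings' theorem applied to the Zariski closure of the images $\phi_d(D_i)$ inside $W_d^0(C)$ for the only if direction, and Hilbert irreducibility together with the positive-rank translate for the if direction. The issues you flag at the end are exactly the places where work is required. In particular, in the if direction under condition~(2) one must argue that not all of the infinitely many effective $K$-rational degree~$d$ divisors parametrised by $\phi_d(x_1+\cdots+x_d)+A$ are reducible; and in the only if direction, when a fibre of $\phi_d$ over the translate $v+A$ is positive-dimensional, removing the base locus of the resulting $g^1_d$ may yield a map of degree strictly less than $d$, so one has to either recurse on $d$ or argue that the base locus is empty. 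Both points are handled in \cite{belov}, but they are genuine and your proposal is right to single them out.
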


\begin{proposition}[{\cite[Proposition 3.6]{DebarreFahlaoui}}]\label{DF_AV_dim}
    Let $C$ be a curve over $\C$ of genus $g$. Assume that $W_d^r(C)$ contains an abelian variety $A$ and that $d\leq g-1+r$. Then $\dim A\leq d/2-r$.
\end{proposition}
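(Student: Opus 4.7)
The plan is to first reduce to a situation where Clifford's theorem applies, and then sharpen the resulting Martens-type bound on $\dim A$ by using the abelian-variety structure. First, I will fix a basepoint $L_0 \in A$ and set $B := A - L_0 \subseteq J(C) = \Pic^0(C)$. Then $B$ is an abelian subvariety of the Jacobian of the same dimension as $A$, and for every $M \in B$ the line bundle $L := L_0 + M$ lies in $A \subseteq W_d^r(C)$ and satisfies $h^0(L) \geq r + 1$. Combining this with Riemann--Roch and the hypothesis $d \leq g - 1 + r$ yields $h^1(L) = h^0(L) + g - 1 - d \geq r + g - d \geq 1$, so every $L \in A$ is special. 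Clifford's theorem then gives $r + 1 \leq h^0(L) \leq d/2 + 1$, recovering the inequality $r \leq d/2$ but not yet the claimed dimension bound.

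Next I will use the standard tangent-space description of $W_d^r$: at a point $L \in A$ with $h^0(L) = r + 1$, the tangent space $T_L W_d^r$ inside $T_L \Pic^d(C) \cong H^0(\omega_C)^\vee$ equals the annihilator of the image of the cup product $\mu_L : H^0(L) \otimes H^0(\omega_C \otimes L^{-1}) \to H^0(\omega_C)$. A direct section-multiplication argument gives the lower bound $\mathrm{rank}(\mu_L) \geq h^0(L) + h^0(\omega_C \otimes L^{-1}) - 1 = 2r + g - d$. Since $A$ is a translate of the sub-abelian variety $B$, its tangent space $T_L A = T_0 B$ is constant as $L$ varies in $A$, and the containment $T_L A \subseteq T_L W_d^r$ forces $\im(\mu_L) \subseteq (T_0 B)^\perp$ uniformly in $L$.

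The hard part will be extracting the sharper bound $\dim A \leq d/2 - r$ from this uniform containment. A direct application of the two inputs above yields only the Martens-type inequality $\dim A \leq d - 2r$. The required additional factor-of-two improvement comes from translation equivariance: as $L$ varies over $A$, the cup-product maps $\mu_L$ fit into a family over $A$ that is compatible with translation by $B$, so the images $\im(\mu_L)$ are not free to move arbitrarily inside the fixed subspace $(T_0 B)^\perp$. Making this coherence precise, and combining it with the pointwise rank estimate, produces the required doubling and gives $\dim A \leq d/2 - r$. This equivariant refinement of the tangent-space argument is the key technical step of the Debarre--Fahlaoui proof and will be the main obstacle.
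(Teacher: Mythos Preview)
The paper does not prove this proposition at all; it is quoted from Debarre--Fahlaoui \cite{DebarreFahlaoui} and used as a black box, so there is no ``paper's own proof'' to compare against. What remains is to assess whether your proposal would stand on its own as a proof, and here there is a genuine gap at the decisive step.

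Everything up to and including the Martens-type bound $\dim A \leq d-2r$ is fine (modulo two routine caveats: the tangent-space identification $T_L W_d^r = (\im\mu_L)^\perp$ is only valid at points with $h^0(L)=r+1$, and the Hopf-type lower bound on $\operatorname{rank}\mu_L$ needs a base-point-freeness hypothesis you do not verify). The problem is the passage from $d-2r$ to $d/2-r$. Your justification is that ``the cup-product maps $\mu_L$ fit into a family over $A$ that is compatible with translation by $B$'' and that ``making this coherence precise \ldots\ produces the required doubling.'' But translation by $b\in B$ on $\Pic^d(C)$ acts as the identity on the canonical trivialization $T_L\Pic^d(C)\cong H^0(\omega_C)^\vee$, so the only consequence of equivariance is that the fixed subspace $T_0B$ lies in $(\im\mu_L)^\perp$ for every $L\in A$. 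That is exactly the input you already used to get $\dim B\le d-2r$; it yields no further constraint. There is no evident $B$-action on the individual spaces $H^0(L)$ or on the maps $\mu_L$ that would relate $\mu_L$ to $\mu_{L+b}$, so the ``coherence'' you invoke has no content as stated. You yourself flag this step as ``the main obstacle,'' and indeed your outline contains no mechanism that could halve the bound.

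The actual Debarre--Fahlaoui argument does not proceed by refining the infinitesimal Petri/tangent-space computation. It exploits the group law on $A$ globally: since $A=L_0+B$ with $B=-B$, for every $b\in B$ both $L_0+b$ and $L_0-b$ lie in $W_d^r$, and one analyses the resulting map $b\mapsto D_b+D_{-b}$ into a fixed linear system (together with the imposition of $r$ base points to pass from $W_d^r$ to $W_{d-r}^0$). This ``pairing of $b$ with $-b$'' is what produces the factor of two, and it is absent from your sketch. If you want to complete the argument, you should abandon the tangent-space route at this point and instead work with the symmetry $b\leftrightarrow -b$ and the induced morphism from $B$ (or its Kummer quotient) into a projective space of controlled dimension.
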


\begin{corollary}\label{AV_cor}
    The modular curve $X_0(p)$ has only finitely many degree $6$ points over $\Q$ for $p \in \set{157,173, 199}$
\end{corollary}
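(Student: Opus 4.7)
The plan is to apply \Cref{translate_of_abelian_variety_thm} to each of the primes $p\in\{157,173,199\}$. By that theorem, if $X_0(p)$ has infinitely many degree $6$ points over $\Q$, then either (i) there is a $\Q$-rational morphism $X_0(p)\to\PP^1$ of degree $6$, or (ii) there is a degree $6$ point $x$ with Galois conjugates $x_1,\ldots,x_6$ and a positive rank abelian subvariety $A\subseteq J_0(p)$ such that $\phi_6(x_1+\cdots+x_6)+A\subseteq W_6^0 X_0(p)$. My task reduces to excluding both cases for each of the three primes.

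Case (i) I would rule out directly by \cite[Theorem 1.1]{NajmanOrlic22}, which classifies all $X_0(N)$ of $\Q$-gonality at most $6$; none of the three primes appears in that classification, so $\gon_\Q X_0(p)\geq 7$ and no degree $6$ morphism to $\PP^1$ exists. For case (ii), I would first compute the three genera using the genus formula recalled in the proof of \Cref{KVhighgenus}, obtaining $g(X_0(157))=12$, $g(X_0(173))=14$ and $g(X_0(199))=16$; in each case $d=6\leq g-1$, so \Cref{DF_AV_dim} (with $r=0$) forces $\dim A\leq 3$. By Poincar\'e complete reducibility, an $A\subseteq J_0(p)$ of positive rank and dimension at most $3$ must be isogenous to a product at least one of whose simple factors is (isogenous to) a simple isogeny factor of $J_0(p)$ of positive Mordell-Weil rank over $\Q$ and dimension at most $3$. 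I would then consult the LMFDB \cite{lmfdb} decomposition of $J_0(p)$ for each of the three primes, list every simple newform factor of dimension $\leq 3$, and verify that each has analytic rank $0$. Since these factors are of $\mathrm{GL}_2$-type with small dimension, Kolyvagin-Logachev then upgrades analytic rank $0$ to algebraic rank $0$, contradicting the existence of $A$ and completing the exclusion of case (ii).

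The main obstacle is the rigorous rank check in case (ii): for each of the three primes, one has to enumerate the low-dimensional newform isogeny factors of $J_0(p)$ and certify nonvanishing of their $L$-functions at $s=1$. This is purely computational and entirely feasible from the LMFDB data (or via a direct \texttt{Sage}/\texttt{Magma} computation in the spirit of those used in the proof of \Cref{thm_p<3000}), but it is the arithmetic input that the whole argument rests on; everything else is a formal consequence of \Cref{translate_of_abelian_variety_thm}, \Cref{DF_AV_dim}, and the gonality classification of \cite{NajmanOrlic22}.
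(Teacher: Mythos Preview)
Your proposal is correct and follows essentially the same argument as the paper: rule out a degree $6$ gonal map via \cite{NajmanOrlic22}, invoke \Cref{translate_of_abelian_variety_thm} and \Cref{DF_AV_dim} to force $\dim A \leq 3$, and then verify via the LMFDB that every simple factor of $J_0(p)$ of dimension $\leq 3$ has analytic (hence algebraic) rank $0$. The only cosmetic difference is that the paper cites Kato \cite{kato2004p} rather than Kolyvagin--Logachev for the analytic-to-algebraic rank implication, and simply notes $g\geq 7$ rather than listing the individual genera.
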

\begin{proof}
    In all these cases, we have $g(X_0(p))\geq7$ and the $\Q$-gonality of $X_0(p)$ is at least $7$ by \cite[Theorem 1.1]{NajmanOrlic22}. \Cref{translate_of_abelian_variety_thm} now tells us that if the curve $X_0(p)$ has infinitely many degree $6$ points, then $W_6^0(C)$ must contain a translate of a positive rank abelian variety $A$. By \Cref{DF_AV_dim}, we must have $\dim A\leq3$.

    However, in all these cases, all simple abelian subvarieties $A$ of dimension $\leq3$ in the decomposition of the Jacobian $J_0(p)$ have analytic rank $0$. This can be checked on LMFDB by searching all newforms of level $p$, weight $2$, dimension $1-3$, and character order $1$. \begin{center}
    \url{https://www.lmfdb.org/ModularForm/GL2/Q/holomorphic/?level=157%2C173%2C199&weight=2&char_order=1&showcol=analytic_rank}.
    \end{center} Therefore, their rank is also $0$ by Kolyvagin's theorem (generalized by Kato \cite[Corollary 14.3]{kato2004p}) and we get a contradiction.
\end{proof}

The only remaining levels are $p=193,197$. These two curves $X_0(p)$ are of genera $15$ and $16$, respectively. In addition, there exist positive rank abelian varieties $A\subset J_0(p)$ of dimension $\leq3$ and we cannot use \Cref{AV_cor} to eliminate them.

We first deal with $p=197$. The only positive rank simple abelian variety $A\subset J_0(197)$ is the elliptic curve $E$ with LMFDB label \lmfdbec{197}{a}{1}. It has modular degree equal to $10$. This can be checked on \begin{center}
\url{https://www.lmfdb.org/ModularForm/GL2/Q/holomorphic/?level=197&weight=2&char_order=1}.
\end{center}  

\begin{proposition}\label{prop197}
The variety $W_6^0 X_0(197)_\Q$ does not contain a translate of a positive rank abelian variety. Consequently, the modular curve $X_0(197)$ has only finitely many degree $6$ points over $\Q$.
\end{proposition}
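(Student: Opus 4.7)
The plan is to assume for contradiction that $W_6^0 X_0(197)_\Q$ contains a translate $x + A$ of a positive rank abelian subvariety $A$ and derive a contradiction by exploiting the modular parametrization $\pi \colon X_0(197) \to E$ of degree $10$, where $E = $ \lmfdbec{197}{a}{1}. First I would reduce to the case $A \supset \pi^*(E)$: by \Cref{DF_AV_dim} we have $\dim A \leq 3$, and by the observation immediately preceding this proposition, the only positive rank simple abelian subvariety of $J_0(197)$ of dimension $\leq 3$ is $E$, which occurs in $J_0(197)$ with multiplicity $1$. Hence every such $A$ contains $\pi^*(E)$, so after absorbing a shift into $x$ we may assume $W_6^0 X_0(197)_\Q \supset x + \pi^*(E)$.

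The $\Q$-gonality of $X_0(197)$ is at least $7$ by \cite[Theorem 1.1]{NajmanOrlic22}, so every effective divisor of degree $6$ on $X_0(197)$ has $h^0 = 1$. For each $P \in E(\Qbar)$ there is therefore a unique effective divisor $D_P$ of degree $6$ with $[D_P] = x + \pi^*(P)$. Fixing a base point $P_0$, the identification $\pi^*(P - P_0) = [\pi^{-1}(P) - \pi^{-1}(P_0)]$ rearranges to the key linear equivalence
\[ D_P + \pi^{-1}(P_0) \sim D_{P_0} + \pi^{-1}(P) \]
of effective divisors of degree $16$ on $X_0(197)$.

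The main thrust of the argument is then: if $h^0(D_{P_0} + \pi^{-1}(P)) = 1$, the linear equivalence upgrades to an equality of divisors, giving $D_P = D_{P_0} + \pi^{-1}(P) - \pi^{-1}(P_0)$. Choosing $P_0$ in the (dense open) unramified locus of $\pi$ and $P \neq P_0$ generic, the fibres $\pi^{-1}(P_0)$ and $\pi^{-1}(P)$ are disjoint and $\pi^{-1}(P_0)$ has $10$ distinct points, so effectiveness of $D_P$ forces $D_{P_0} \geq \pi^{-1}(P_0)$, contradicting $\deg D_{P_0} = 6 < 10$.

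The hard part will be ruling out the degenerate possibility that $h^0(D_{P_0} + \pi^{-1}(P)) \geq 2$ for every $P \in E$. By Riemann--Roch and Serre duality this is equivalent to $h^0(K_X - D_{P_0} - \pi^{-1}(P)) \geq 1$ for all $P$, which forces $W_{14}^0 X_0(197)_\Q$ to contain a translate of $\pi^*(E)$. Iterating the linear equivalence argument in this higher degree (where $14 \geq 10$, so the naive contradiction fails) ultimately constrains $K_X - 2\pi^{-1}(P_0)$ to be linearly equivalent to an effective divisor. Using $K_X \sim R$ (the ramification divisor of $\pi$, of degree $30$ by Riemann--Hurwitz) together with the decomposition $\pi_* \O_{X_0(197)} = \O_E \oplus F$ --- where $F$ is a rank-$9$ bundle of degree $-15$ on $E$ whose cohomology is determined by the modular structure --- one shows that this requirement fails, completing the contradiction. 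The ``Consequently'' assertion then follows at once: combining the first part with \Cref{translate_of_abelian_variety_thm} and the fact that gonality $\geq 7$ precludes any degree-$6$ map $X_0(197) \to \PP^1$, $X_0(197)$ has only finitely many degree-$6$ points over $\Q$.
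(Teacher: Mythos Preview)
Your reduction to the single elliptic factor $E$ and the non-degenerate branch are correct: if $h^0(D_{P_0}+\pi^{-1}(P))=1$ for some $P\ne P_0$ then the linear equivalence upgrades to an equality of divisors and $D_{P_0}\ge\pi^{-1}(P_0)$ is impossible by degree.

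The degenerate branch, however, is not a proof. You claim that $h^0(K_X-D_{P_0}-\pi^{-1}(P))\ge1$ for every $P$ ``ultimately constrains $K_X-2\pi^{-1}(P_0)$ to be effective'', but iterating your rigidity argument in degree $14$ lands in degree $24$, where $h^0\ge9$ automatically by Riemann--Roch; the rigidity step is then vacuous, and Serre duality simply returns you to the original degree-$6$ translate without any gain. Nor can the decomposition $\pi_*\O_X=\O_E\oplus F$ finish the job: by the projection formula the degenerate hypothesis reads $h^0(E,\mathcal{L}(-P))\ge1$ for all $P$, where $\mathcal{L}=\pi_*\bigl(K_X(-D_{P_0})\bigr)$ is a rank-$10$, degree-$9$ bundle on $E$ that depends on the \emph{unknown} divisor $D_{P_0}$. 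Knowing $F$ controls $\pi_*K_X$, not $\pi_*\bigl(K_X(-D_{P_0})\bigr)$. Even the auxiliary claim that $K_X-2\pi^{-1}(P_0)$ is not effective would require computing $h^0(F^\vee(-2P_0))$ for a specific rank-$9$, degree-$(-3)$ bundle on $E$, which you have not done and which is itself a nontrivial computation.

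The paper sidesteps all of this with a direct finite check: it reduces modulo $3$, enumerates the points of $W_6^0X_0(197)(\F_3)$, and for each such $x$ exhibits $P\in E(\F_3)$ with $x+P\notin W_6^0X_0(197)(\F_3)$; the conclusion over $\Q$ then follows from \cite[Lemma~4.8]{DHJOdensitydegree}, exactly as in \cite[Proposition~4.9]{DHJOdensitydegree}.
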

\begin{proof}
According to the LMFDB, $J_0(197)$ has only two simple isogeny factors of positive rank. These are the elliptic curve corresponding to the newform \lmfdbmf{197}{2}{a}{a} and a $5$-dimensional abelian variety corresponding to the newform \lmfdbmf{197}{2}{a}{b}. A translate of the $5$-dimensional variety cannot be contained in $W_6^0 X_0(197)$ due to \Cref{DF_AV_dim}. Note that the newform only specifies an elliptic curve up to isogeny. However, the elliptic curve with the LMFDB label \lmfdbec{197}{a}{1} corresponding to the modular form \lmfdbmf{197}{2}{a}{a} is the only one in its isogeny class.

Let $E$ be the elliptic curve \lmfdbec{197}{a}{1}. To show that $W_6^0 X_0(197)$ does not contain a translate of $E$, we use the same argument as in the proof of \cite[Proposition 4.9]{DHJOdensitydegree}. We explicitly computed the modular parametrization $X_0(197) \to E$. Using \texttt{Magma} computations over $\F_3$ we showed that $W_6^0 X_0(197)_{\F_3}$ does not contain a translate of $E_{\F_3}$. This was done by enumerating the finitely many points in $W_6^0 X_0(197)(\F_3)$ and for each such point $x$ we found a point $P \in E(\F_3)$ such that $x+P \notin W_6^0 X_0(197)(\F_3)$. After that, we use \cite[Lemma 4.8]{DHJOdensitydegree} to conclude that $W_6^0 X_0(197)_\Q$ does not contain a translate of $E$.

Since the $\Q$-gonality of $X_0(197)$ is equal to $8$ by \cite[Theorem 1.1]{NajmanOrlic22}, \Cref{translate_of_abelian_variety_thm} now tells us that $X_0(197)$ has only finitely many degree $6$ points.
\end{proof}

Combining the results of \Cref{inf_deg6points_section} and \Cref{finite_deg6points_section} gives us the proof of \Cref{deg6_thm}.

\begin{proof}[Proof of \Cref{deg6_thm}]
    For all primes $p$ listed in \Cref{deg6_thm} we prove that the curve $X_0(p)$ has infinitely many degree $6$ points in \Cref{deg6map} and \Cref{bars_cubic_points}. For all other primes $p$ we prove that $X_0(p)$ has only finitely many degree $6$ points in \Cref{KVhighgenus}, \Cref{DF_AV_dim}, and \Cref{prop197}.
\end{proof}

For the end of this paper we give a short discussion about the remaining level $p=193$. Unfortunately, so far we could not determine whether it has infinitely many degree $6$ points.

The curve $X_0(193)$ has genus $g=15$ and $\Q$-gonality equal to $8$ by \cite[Corollary 3.24]{Orlic2023}. We also know that it has only finitely many points of degree $\leq5$ by \Cref{quarticthm} and \Cref{quinticthm}. By \Cref{translate_of_abelian_variety_thm} and \Cref{DF_AV_dim} there exists a positive rank abelian variety $A$ of dimension $\leq3$ whose translate is contained in $W_6(X_0(193))$. There is only such possible variety $A$ and it is of dimension $2$.

Debarre-Fahlaoui curves play an important role in results classifying curves with infinitely many degree points, as can be seen in \cite{DebarreFahlaoui, KadetsVogt}, for example. We will not give a definition of the Debarre-Fahlaoui curves here. Instead, we present one necessary condition for the curve $C/\Q$ to be Debarre-Fahlaoui. The reason for this is that we do not know how to prove that a curve is Debarre-Fahlaoui. The definition can be found in \cite[Definition 5.1]{KadetsVogt}.

\begin{proposition}[{\cite[Definition 4.4, Proposition 4.6]{DHJOdensitydegree}}]\label{DF_proposition}
    If $C/\Q$ is the normalization of a Debarre-Fahlaoui curve of genus $\geq1$, then $C$ admits a non-constant morphism to a positive rank elliptic curve.
\end{proposition}

Since there are no elliptic curves in the decomposition of $J_0(193)$ (as there are no elliptic curves over $\Q$ with conductor $193$), \Cref{DF_proposition} implies that $X_0(193)$ is not Debarre-Fahlaoui.

Suppose that $\min(\delta(X_0(193)/\Q))=6$. \Cref{thm_p<3000} tells us that the quotient map $X_0(193)\to X_0^+(193)$ is the only non-constant rational morphism from $X_0(193)$ to a curve of genus $\geq2$. As the curve $X_0^+(193)$ has only finitely many cubic points by \cite[Theorem 1]{BarsDalal22}, we conclude that $X_0(193)$ is $6$-minimal.

These results give us a strong indication that the curve $X_0(193)$ has only finitely many degree $6$ points, see \cite[Theorems 5.8, 6.2]{KadetsVogt}.

\bibliographystyle{siam}
\bibliography{references}
\end{document}